\def\afooter#1#2{{\def\thefootnote{\mbox{${}^{#1}$}}\mbox{\footnotemark[0]}
        \footnotetext{#2} }}
\newcommand\mc[1]{\mathcal{#1}}
\newcommand\Z{\mathbb{Z}}
\newcommand\Q{\mathbb{Q}}
\newcommand{\dlra}[1]{\stackrel{#1}{\longrightarrow}}
\newcommand\tensor{\otimes}
\newcommand\del{\partial}
\newcommand\hra{\hookrightarrow}
\newcommand\ra{\rightarrow}
\newcommand\sra{\ra \! \! \! \! \! \ra}
\newtheorem{Theorem}{Theorem}[section]
\newtheorem{Corollary}[Theorem]{Corollary}
\newtheorem{Remark}[Theorem]{Remark}
\newtheorem{Lemma}[Theorem]{Lemma}
\newtheorem{Proposition}[Theorem]{Proposition}
\title{The smooth structure set of $S^p \times S^q$
\afooter{\mbox{ \ }}{Keywords and phrases: smooth structure set, surgery exact sequence, product of spheres, diffeomorphism classification.\\ \hspace*{5mm} 2000 Mathematics
Subject Classification:  57R55,57R65.}} 
\author{Diarmuid Crowley}
\date{\today}
\begin{document}
\maketitle

\begin{abstract}
We calculate $\mathcal{S}^{Diff}(S^p \times S^q)$, the smooth structure set of $S^p \times S^q$, for $p, q \geq 2$ and $p+q \geq 5$.  As a consequence we show that in general $\mathcal{S}^{Diff}(S^{4j-1} \times S^{4k})$ cannot admit a group structure such that the smooth surgery exact sequence is a long exact sequence of groups.  We also show that the image of forgetful map $\mathcal{S}^{Diff}(S^{4j} \times S^{4k}) \rightarrow \mathcal{S}^{Top}(S^{4j} \times S^{4k})$ is not in general a subgroup of the topological structure set.
\end{abstract}

\section{Introduction}
We work in the categories of closed, oriented, simply-connected $Cat$-manifolds $M$ and $N$ of dimension $n \geq 5$ and orientation preserving maps, where $Cat = Diff$ for smooth manifolds or $Cat = Top$ for topological manifolds.  The $Cat$-structure set of $M$, $\mathcal{S}^{Cat}(M)$, is the set of structure invariants $[N, f]$ which are equivalence classes of homotopy equivalences $f: N \rightarrow M$ where $f_0 : N_0 \rightarrow M$ and $f_1: N_1 \rightarrow M$ are equivalent if $f_1^{-1} \circ f_0$ is homotopic to an isomorphism (diffeomorphism or homeomorphism).  The base point of $\mathcal{S}^{Cat}(M)$ is the equivalence class of ${\rm Id}_{}: M = M$.   There is an an obvious forgetful map $F: \mathcal{S}^{Diff}(M) \rightarrow \mathcal{S}^{Top}(M)$ such that the smooth and topological surgery exact sequences for $M$ are commuting long exact sequences of pointed sets 
\begin{equation} \label{seseqn1}
\begin{diagram}
\divide\dgARROWLENGTH by 4
\node{\dots} \arrow{e} \node{\mc{N}^{Diff}(M \times [0, 1])} \arrow{e,t}{} \arrow{s,r}{F} \node{L_{n+1}(e)} \arrow{e,t}{\omega^{Diff}} \arrow{s,r}{=} \node{\mathcal{S}^{Diff}(M)} \arrow{e,t}{\eta^{Diff}} \arrow{s,r}{F} \node{\mathcal{N}^{Diff}(M)} \arrow{s,r}{F} \arrow{e,t}{\theta^{Diff}} \node{L_{n}(e)} \arrow{s,r}{=}\\
\node{\dots} \arrow{e} \node{\mc{N}^{Top}(M \times [0,1])} \arrow{e,t}{} \node{L_{n+1}(e)} \arrow{e,t}{0} \node{\mathcal{S}^{Top}(M)} \arrow{e,t}{\eta^{Top}} \node{\mathcal{N}^{Top}(M)} \arrow{e,t}{\theta^{Top}} \node{L_{n}(e).}
\end{diagram}
\end{equation}
Here $\mathcal{N}^{Cat}(M)$ and $\mc{N}^{Cat}(M \times [0, 1])$ are the $Cat$ normal invariant sets of $M$ and $M \times [0, 1]$ relative boundary, $L_{n+1}(e)$ and $L_n(e)$ are the surgery obstruction groups: $L_i(e) \cong \Z, 0,\Z_2, 0$ as $i \equiv 0, 1, 2, 3$ mod $4$ and $L_{n+1}(e)$ acts transitively on the fibres of $\eta^{Cat}$.  Using identity maps as base points we have Sullivan's familiar identifications $\mathcal{N}^{Cat}(M) \equiv [M, G/Cat]$ and $\mc{N}^{Cat}(M \times [0, 1]) \equiv [\Sigma M, G/Cat]$ where $\Sigma M$ is the suspension of $M$.  We refer 
the reader to Section \ref{surgprelimsec} for some further definitions and references to the literature.

In this paper we calculate $\mc{S}^{Diff}(S^p \times S^q)$, for $p, q \geq 2$ and $n = p +q \geq 5$.  We develop the necessary preliminaries by first recalling $(\ref{seseqn1})$ when $M = S^n$ and $(\ref{seseqn1})^{Top}$ when $M = S^p \times S^q$.

The Generalised Poincar\'{e} Conjecture, due to \cite{Sm}, asserts that $\mc{S}^{Top}(S^n) = \{ [\rm Id] \}$ whereas the smooth structure set of $S^n$, $\mathcal{S}^{Diff}(S^n) = \Theta_n \cong \pi_n(Top/O)$, is the finite abelian group of diffeomorphism classes of homotopy $n$-spheres.  With $\mc{N}^{Diff}(S^n) \equiv \pi_n(G/O)$ and $\mc{N}^{Top}(S^n) \equiv \pi_n(G/Top)$ we have the commuting long sequences of abelian groups, the upper due to  \cite{K-M},
\begin{equation} \label{seseqnS^n}
\begin{diagram}
\divide\dgARROWLENGTH by 2
\node{\dots} \arrow{e} \node{\pi_{n+1}(G/O)} \arrow{e,t}{\theta^{Diff}} \arrow{s,r}{F} \node{L_{n+1}(e)} \arrow{e,t}{\omega^{Diff}} \arrow{s,r}{=} \node{\Theta_n} \arrow{e,t}{\eta^{Diff}} \arrow{s,r}{F} \node{\pi_n(G/O)} \arrow{s,r}{F} \arrow{e,t}{\theta^{Diff}} \node{L_{n}(e)} \arrow{s,r}{=}\\
\node{\dots} \arrow{e} \node{\pi_{n+1}(G/Top)} \arrow{e,t}{\theta^{Top}} \node{L_{n+1}(e)} \arrow{e,t}{0} \node{\{ [\rm Id] \} } \arrow{e,t}{0} \node{\pi_n(G/Top)} \arrow{e,t}{\theta^{Top}} \node{L_{n}(e).}
\end{diagram}
\end{equation}
The topological sequence gives the fundamental identification $\pi_n(G/Top) = L_n(e)$ which we often make without further comment.  The image of $\omega^{Diff}$ is $bP_{n+1}$, the finite cyclic group of diffeomorphism classes of homotopy spheres bounding parallelisable manifolds: thus $bP_{n+1} \cong L_{n+1}(e)/\theta^{Diff}(\pi_{n+1}(G/O)) \cong \pi_{n+1}(G/Top)/F(\pi_{n+1}(G/O))$.  

Now let $i: S^p \vee S^q \rightarrow S^p \times S^q$ be the inclusion and $c : S^p \times S^q \rightarrow S^{p+q}$ the collapse map.  We have the identification $\mathcal{N}^{Cat}(S^p \times S^q) \equiv [S^p \times S^q, G/Cat]$ and the split exact sequence
\[ 0 \dlra{} \pi_{p+q}(G/Cat) \dlra{c^*} [S^p \times S^q, G/Cat] \dlra{i^*} \pi_p(G/Cat) \times \pi_q(G/Cat) \dlra{} 0.\]
In Section \ref{surgprelimsec} we use the product of normal maps to define a section $\pi_{p, q}$ to $i^*$ and thus an identification
\begin{equation} \label{nmlspliteqn}
\pi_{p, q} \times c^* : \prod_{i=1}^3\pi_{p_i}(G/Cat)  \cong  [S^p \times S^q, G/Cat],~~(p_1, p_2, p_3) = (p, q, p+q).
\end{equation}
%
%
In the topological case, see \cite{Ra1, K-L}[Ex. 20.4, \S 7], the map $i^* \circ \eta^{Top}$ defines a bijection
\begin{equation} \label{topSS^pxS^qeqn}
 i^* \circ \eta^{Top} : \mc{S}^{Top}(S^p \times S^q) \equiv \pi_p(G/Top) \times \pi_q(G/Top) 
 \end{equation}
and the surgery exact sequences of $(\ref{seseqn1})$, the lower sequence found in\cite{Ra4}, are isomorphic to
\begin{equation} \label{seseqnS^pxS^q}
\begin{diagram}
\divide\dgARROWLENGTH by 2
\node{\prod_{i=1}^3\pi_{p_i+1}(G/O)} \arrow{e} \arrow{s,r}{F} \node{L_{p+q+1}(e)} \arrow{e,t}{\omega^{Diff}} \arrow{s,r}{=} \node{\mc{S}^{Diff}(S^p \times S^q)} \arrow{e,t}{\eta^{Diff}} \arrow{s,r}{F} \node{\prod_{i=1}^3\pi_{p_i}(G/O)} \arrow{s,r}{F} \arrow{e,t}{\theta^{Diff}} \node{L_{n}(e)} \arrow{s,r}{=}\\
\node{\prod_{i=1}^3\pi_{p_i+1}(G/Top)} \arrow{e} \node{L_{p+q+1}(e)} \arrow{e,t}{0} \node{\prod_{i=1}^2\pi_{p_i}(G/Top)} \arrow{e,t}{\eta^{Top}} \node{\prod_{i=1}^3\pi_{p_i}(G/Top)} \arrow{e,t}{\theta^{Top}} \node{L_{n}(e).}
\end{diagram}
\end{equation}
Here $\eta^{Top}(x, y) = (x, y, -xy)$ and $\theta^{Top}(x, y, z) = xy +z$ where the product $xy$ is defined by the pairing $\alpha(p, q) :L_p(e) \tensor L_q(e) \rightarrow L_{p+q}(e)$ as in \cite{Ra1}.  Note that $\alpha(p, q) = 0$ unless $(p, q) = (4j, 4k)$ when it is isomorphic to multiplication by $8$ and that our identification of $[S^p \times S^q, G/Top]$ differs from that of \cite{Ra4}.

We see that calculating $\mc{S}^{Diff}(S^p \times S^q)$ amounts to calculating the image of $\eta^{Diff}$ in $\prod_{i=1}^3\pi_{p_1}(G/O)$ and the action of $L_{p+q+1}(e)$ on $\mc{S}^{Diff}(S^p \times S^q)$.  For both points, it is helpful to recall that in general $\Theta_n$ acts on $\mathcal{S}^{Diff}(M)$ for any manifold $M$ and that by \cite{Bro} the action of $L_{n+1}(e)$ on $\mc{S}^{Diff}(M)$ factors through $\omega^{Diff} : L_{n+1}(e) \rightarrow bP_{n+1} \subset \Theta_n$ (see Section \ref{surgprelimsec}).  We begin stating our results with the illustrative special cases of $S^3 \times S^4$ and $S^4 \times S^4$ where we use the isomorphism $\pi_4(G/O) \cong \Z$.

\begin{Theorem} \label{S^3xS^4thm}
For $M = S^3 \times S^4$ there is a short exact sequence of pointed sets
\[ 0 \dlra{} \Z_{28} \dlra{\omega^{bP}} \mathcal{S}^{Diff}(S^3 \times S^4) \dlra{\eta} \Z \dlra{} 0 \]
where, $\Z_{28} \cong bP_8 = \Theta_7$ acts transitively on the fibres of $\eta$, $7 \cdot bP_8 \cong \Z_4$ acts freely on all of $\mathcal{S}^{Diff}(S^3 \times S^4)$ but $4 \cdot bP_8 \cong \Z_7$ acts freely on $\eta^{-1}(v)$
if and only if $v \in \Z$ is divisible by $7$.
\end{Theorem}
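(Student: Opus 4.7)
The plan is to read off the short exact sequence directly from the smooth surgery sequence $(\ref{seseqnS^pxS^q})$, and then to analyse the $\Theta_7 = \Z_{28}$-action on each fibre of $\eta^{Diff}$ using explicit $S^3$-bundle representatives and spin-cobordism invariants.

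For $(p,q) = (3,4)$ the needed inputs are $\pi_3(G/O) = \pi_7(G/O) = 0$ (standard, via surjectivity of the $J$-homomorphism in degrees $3$ and $7$), $\pi_4(G/O) \cong \Z$, $L_7(e) = 0$ and $L_8(e) = \Z$. Substituting into $(\ref{seseqnS^pxS^q})$, the map $\theta^{Diff}$ on the right vanishes and $\eta^{Diff}$ is surjective onto $\Z$. On the left, a short computation of the map $\prod_{i=1}^{3} \pi_{p_i+1}(G/O) \to L_8(e)$ (in which the $\pi_8(G/O) \to L_8(e)$ factor is multiplication by $28$, read off from the surgery sequence for $S^8$ and the surjectivity of $\omega^{Diff}:L_8(e) \twoheadrightarrow bP_8$) shows that its image is exactly $28\Z$. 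Hence $\omega^{Diff}$ has image $L_8(e)/28\Z \cong bP_8 = \Z_{28}$. The asserted short exact sequence of pointed sets now follows, subject to verifying $I_h(S^3 \times S^4, \mathrm{Id}) = 0$ so that $\Z_{28}$ acts freely on the base-point fibre; this is classical and follows from additivity of the Eells--Kuiper $\mu$-invariant under connected sum together with $\mu(S^3 \times S^4) = 0$.

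To determine the stabilisers on the other fibres, I would represent $(\eta^{Diff})^{-1}(v)$ by a linear $S^3$-bundle $M_v \to S^4$ with vanishing Euler class and normal invariant $v$, indexed through $\pi_3(SO(4)) \cong \Z \oplus \Z$. The stabiliser of $[M_v, f_v]$ under the $\Z_{28}$-action is the homotopy inertia group $I_h(M_v, f_v) \subseteq \Theta_7 = \Z_4 \oplus \Z_7$. The claim that $7 \cdot bP_8 \cong \Z_4$ acts freely on all of $\mc{S}^{Diff}(S^3 \times S^4)$ will follow from a $2$-local spin-cobordism argument (a $2$-local reduction of $\mu$, equivalently an $\alpha$-invariant computation): the $2$-local indeterminacy of $\mu(M_v)$ is trivial regardless of $v$, so the $2$-part of $\mu$ detects the whole $\Z_4$-factor.

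The principal difficulty is the $\Z_7$-statement, which carries the arithmetic content of the theorem. The plan is to exploit the classical parametrisation of $S^3$-bundles over $S^4$: different clutching functions in $\pi_3(SO(4))$ with the same Euler number and the same normal invariant can give diffeomorphic total spaces, and this ambiguity realises connected sum with certain exotic $7$-spheres as an intrinsic re-clutching. Concretely, I would identify a generator $\Sigma$ of $4 \cdot bP_8 \cong \Z_7$ with a specific Milnor sphere-bundle construction, identify $\Sigma \# M_v$ with a new bundle $M_{v'}$, and compute via a Pontryagin-class calculation on the disc-bundle coboundary $W_v \to S^4$ that $v' = v$ as structure invariants precisely when $7 \mid v$. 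The delicate step is tracking the effect of this re-clutching on $v$ modulo $7$ and confirming that the coefficient of $v$ in the resulting linear obstruction is a unit in $\Z_7$; this is the place where the mod-$7$ divisibility in the theorem appears.
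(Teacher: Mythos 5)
Your first paragraph is essentially sound and parallels what the paper does (the short exact sequence and the base-point statement come from the surgery sequence $(\ref{seseqnS^pxS^q})$ plus triviality of the inertia group of $S^3\times S^4$, cf.\ the start of the proof of Theorem \ref{mainthm1}). The genuine gap is in your treatment of the stabilisers, which is the actual content of the theorem. The stabiliser of $[N_v,f_v]$ under the $bP_8$-action is not the inertia group $I(N_v)$ but the set of $[\Sigma]$ admitting a diffeomorphism $\Sigma\sharp N_v\cong N_v$ compatible up to homotopy with the reference map $f_v$; diffeomorphism invariants of the underlying manifold ($\mu$, $\alpha$) and bundle re-clutching constructions only see $I(N_v)$. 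By Wilkens \cite{Wi} (quoted in Theorem \ref{class34thm}) $I(N_v)\cong\Z_{14/(14,v)}$, which for odd $v$ contains the order-two exotic sphere: e.g.\ $\Sigma\sharp N_1$ \emph{is} diffeomorphic to $N_1$ for the generator $\Sigma$ of $14\cdot bP_8$, so no $2$-local $\mu$/$\alpha$ computation can show $\Sigma\notin (bP_8)_{[N_1,f_1]}$ --- yet the theorem asserts that stabiliser is $\Z_7$, i.e.\ $7\cdot bP_8\cong\Z_4$ acts freely there. Your claim that ``the $2$-part of $\mu$ detects the whole $\Z_4$-factor'' fails precisely because $\mu$ has indeterminacy on $N_v$ for $v\neq 0$ (here $p_1(N_v)=48v\neq 0$ rationally), and that indeterminacy is exactly what produces Wilkens' nontrivial inertia groups; conflating the structure-set stabiliser with the inertia group is the error the paper's own Remark attributes to \cite{C1}. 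Symmetrically, your re-clutching argument for the $\Z_7$-statement produces diffeomorphisms $\Sigma\sharp N_v\cong N_{v'}$, but a diffeomorphism alone never gives an equality of structure invariants unless it is shown homotopic to the identity, and it can never rule elements \emph{out} of the stabiliser.

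What is missing is a structure-set-level computation, and this is where the paper's proof (Theorem \ref{S^3xS^4thm} is deduced from Theorem \ref{mainthm2}) goes a different and unavoidable route: by naturality of the surgery exact sequence (Lemmas \ref{sesnatlem} and \ref{bPactlem}) the stabiliser of $[N,f]$ under $L_8(e)$ equals ${\rm Im}(\theta_{N\times[0,1]})$; gluing to $N\times S^1$ via $T_N$ and applying the composition formula for normal invariants (Proposition \ref{compoprop}, Lemma \ref{keysurgoblem}) converts this into a surgery-obstruction computation over $S^3\times S^4\times S^1$, where the product formula (Lemmas \ref{nrmlinvtlem}, \ref{thetatoplem}) shows ${\rm Im}(\theta_{N\times[0,1]})\subset L_8(e)\cong\Z$ is generated by $28$ and $32\,\eta([N,f])$, whence $(bP_8)_{[N,f]}=32\,\eta([N,f])\cdot bP_8$. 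This single computation yields both the free $\Z_4$-action and the exact mod-$7$ divisibility criterion, giving membership and non-membership simultaneously. Without this (or an explicit control of the action of homotopy self-equivalences of $S^3\times S^4$ on the structure set, which the paper deliberately avoids), your program cannot establish the stabiliser statements.
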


%
%

\begin{Corollary} \label{S^3xS^4cor}
The sets $\mathcal{S}^{Diff}(S^3 \times S^4)$ and $\mathcal{N}^{Diff}(S^3 \times S^4) \equiv \Z$ cannot be given group structures such that $\eta^{Diff}$ is a homomorphism.  The same holds for $\mathcal{S}^{Diff}(S^3 \times S^4)$ and both $\omega^{bP} : bP_8 \rightarrow \mc{S}^{Diff}(S^3 \times S^4)$ and $\omega^{} : L_8(e) \rightarrow \mathcal{S}^{Diff}(S^3 \times S^4)$.  
\end{Corollary}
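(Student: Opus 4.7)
The plan is to extract both assertions directly from the explicit action data in Theorem \ref{S^3xS^4thm}.

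The first assertion is a cardinality count. Because $\Z_{28} = bP_8$ acts transitively on the fibres of $\eta^{Diff}$ with $\Z_4 = 7 \cdot bP_8$ acting freely throughout, the stabiliser $H_v$ of any point of $\eta^{-1}(v)$ satisfies $H_v \cap \Z_4 = 0$; Theorem \ref{S^3xS^4thm} further gives $H_v \cap \Z_7 = 0$ when $7 \mid v$ and $H_v \cap \Z_7 = \Z_7$ otherwise (the intersection being a nontrivial subgroup of $\Z_7$ is automatically all of $\Z_7$). Since $\Z_{28} \cong \Z_4 \oplus \Z_7$, this forces $|\eta^{-1}(v)| = 28$ for $7 \mid v$ and $|\eta^{-1}(v)| = 4$ for $7 \nmid v$. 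A homomorphism $\eta^{Diff}: \mathcal{S}^{Diff}(S^3 \times S^4) \rightarrow \Z$ between groups would force every non-empty fibre to be a coset of $\ker \eta^{Diff}$ and hence of common cardinality, so comparing $\eta^{-1}(0)$ with $\eta^{-1}(1)$ yields the contradiction.

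For the remaining two claims, I would suppose $\mathcal{S}^{Diff}(S^3 \times S^4)$ carried a group structure with respect to which $\omega^{bP}: bP_8 \rightarrow \mathcal{S}^{Diff}(S^3 \times S^4)$ is a homomorphism. Read in the surgery-theoretic sense of Section \ref{surgprelimsec}, this hypothesis means that the $L_{n+1}(e)$-action on $\mathcal{S}^{Diff}$ coincides with the composite of $\omega$ with left translation in the group, and consequently the $bP_8$-action on $\mathcal{S}^{Diff}(S^3 \times S^4)$ is precisely left translation by the subgroup $\omega^{bP}(bP_8)$. Such an action is automatically free, so $bP_8$ would act freely on the whole structure set; but Theorem \ref{S^3xS^4thm} produces $v$ with $7 \nmid v$ for which $\Z_7 \subset bP_8$ fails to act freely on $\eta^{-1}(v)$, a contradiction. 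Using the factorisation of $\omega$ through $\omega^{bP}$ via the natural surjection $L_8(e) \twoheadrightarrow bP_8$, the same reasoning rules out a group structure compatible with $\omega$.

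The main obstacle I anticipate is being explicit about the compatibility convention used in the second paragraph: one must state clearly that ``$\omega^{bP}$ is a homomorphism'' is to be understood in the sense that makes the surgery sequence a sequence of groups, so that the hypothesis genuinely forces the $bP_8$-action to coincide with translation. Without this convention, $\omega^{bP}$ being a set-theoretic group homomorphism alone leaves the connected-sum action unconstrained and the freeness argument has no purchase.
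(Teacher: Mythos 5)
Your proposal is correct and follows essentially the same route as the paper: the first claim via fibres of a homomorphism being cosets of the kernel (hence of equal size, contradicting the sizes $4$ and $28$ forced by Theorem \ref{S^3xS^4thm}), and the second via the observation that a compatible group structure would make all stabilisers of the $bP_8$- (resp.\ $L_8(e)$-) action equal, contradicting the varying stabilisers. The compatibility convention you flag is indeed the reading implicit in the paper's proof, which phrases the same point as ``the stabilisers of the associated action would all be equal to $(\omega^{bP})^{-1}([\mathrm{Id}])$.''
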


\begin{proof}
If $\eta$ were a homomorphism then its fibres would all be cosets of the kernel and hence have the same size.  If $\omega^{bP}$ (resp. $\omega$) were a homomorphism, the stabilisers of the associated action would all be equal to $(\omega^{bP})^{-1}([{\rm Id}])$ (resp. $\omega^{-1}([{\rm Id}])$).
\end{proof}

\begin{Theorem} \label{S^4xS^4thm}
For $M = S^4 \times S^4$ there is a short exact sequence of pointed sets
\[ 0 \dlra{} \Z_2 \dlra{\omega^\Theta} \mathcal{S}^{Diff}(S^4 \times S^4) \dlra{i^* \circ \eta} \Z \times \Z \dlra{\del} \Z_7 \dlra{} 0 \]
where $\Z_2 \cong \Theta_8$ acts freely and transitively on the fibres of $i^* \circ \eta$ and $\del(u, v) = u v$ mod $7$.
\end{Theorem}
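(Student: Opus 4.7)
The plan is to apply the smooth surgery exact sequence $(\ref{seseqnS^pxS^q})$ with $p = q = 4$, use $L_9(e) = 0$ to identify $\mc{S}^{Diff}(S^4 \times S^4)$ with $\ker \theta^{Diff}$, and then analyse this kernel through the factorisation $\theta^{Diff} = \theta^{Top} \circ F$.

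Since $L_9(e) = 0$, the $L_{n+1}(e)$-action is trivial and $\eta^{Diff}$ is injective, so $\mc{S}^{Diff}(S^4 \times S^4)$ is identified with $\ker \bigl( \theta^{Diff} \colon \pi_4(G/O) \times \pi_4(G/O) \times \pi_8(G/O) \to L_8(e) \cong \Z \bigr)$.  The long exact sequence of the fibration $Top/O \to G/O \to G/Top$, together with $\pi_3(Top/O) = 0$, forces $F_4 \colon \pi_4(G/O) \to \pi_4(G/Top)$ to be an isomorphism, so after compatibly identifying both sides with $\Z$ the formula $\theta^{Top}(x,y,z) = 8xy + z$ (using $\alpha(4,4) = 8$) yields $\theta^{Diff}(u,v,z) = 8uv + F_8(z)$, where $F_8 \colon \pi_8(G/O) \to L_8(e) \cong \Z$.

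The key computation is to pin down $F_8$.  Applying $(\ref{seseqnS^n})$ at $n = 8$, the injectivity of $\eta^{Diff}_{S^8}$ identifies $\Theta_8 \cong \Z_2$ with $\ker F_8$; and by extending the sequence one step further via the same fibration (using $\pi_k(Top/O) \cong \Theta_k$ for $k \geq 5$) the image of $F_8$ equals $\ker \bigl( \omega^{Diff} \colon L_8(e) \to \Theta_7 \bigr)$.  As noted after $(\ref{seseqnS^n})$, this $\omega^{Diff}$ surjects onto $bP_8 = \Theta_7 \cong \Z_{28}$, so its kernel is $28 \Z$.  Hence $\pi_8(G/O) \cong \Z \oplus \Z_2$ with $F_8$ given by projection onto the $\Z$-summand followed by multiplication by $28$.

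Putting this together, the equation $8uv + F_8(z) = 0$ admits a solution $z$ iff $28 \mid 8uv$, i.e.\ iff $7 \mid uv$; and when it does, the solutions form a single coset of $\ker F_8 \cong \Theta_8$.  Thus $i^* \circ \eta$ has image $\ker \del$ with fibres of size $|\Theta_8| = 2$.  To finish, connect sum with $\Sigma \in \Theta_8$ shifts the normal invariant by $c^* \bigl( \eta^{Diff}_{S^8}(\Sigma) \bigr) \in [S^4 \times S^4, G/O]$; both $\eta^{Diff}_{S^8} \colon \Theta_8 \hookrightarrow \pi_8(G/O)$ and the split injection $c^*$ from $(\ref{nmlspliteqn})$ are non-zero on $\Sigma \neq 0$, so the $\Theta_8$-action on $\mc{S}^{Diff}(S^4 \times S^4)$ is free, and then transitive on each fibre by cardinality.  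The main obstacle is the identification of the image of $F_8$ as $28\Z$: it rests on the surjection $\omega^{Diff} \colon L_8(e) \to bP_8 \cong \Z_{28}$, and the value $7 = 28 / \gcd(28, 8)$ appearing in $\del$ ultimately comes from this.
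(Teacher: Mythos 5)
Your route (specialise the smooth surgery sequence to $p=q=4$, use $L_9(e)=0$ to see $\eta^{Diff}$ is injective with image $\ker\theta^{Diff}$, and compute $\theta^{Diff}=\theta^{Top}\circ F$ coordinatewise) is essentially a hands-on specialisation of what the paper packages as Theorems \ref{mainthm1} and \ref{mainthm2}; the paper's own proof just quotes Theorem \ref{mainthm2} together with $bP_8\cong\Z_{28}$, $\pi_4(G/O)\cong\Z$ and $\Theta_8\cong\Z_2$. However, there is a genuine error in your key computation: the claim that $\pi_3(Top/O)=0$, and hence that $F_4\colon\pi_4(G/O)\to\pi_4(G/Top)$ is an isomorphism, is false. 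Since $Top/PL\simeq K(\Z_2,3)$ and $PL/O$ is $6$-connected, one has $\pi_3(Top/O)\cong\Z_2$, and by Rochlin's theorem $F_4$ is injective with image of index two in $\pi_4(G/Top)\cong L_4(e)$; this is precisely the paper's $t_4=2$, which enters the constant $8\,t_p\,t_q$ in Theorem \ref{mainthm1} and the proof of Corollary \ref{imFnotgrpcor}. With the correct $F_4$ your obstruction formula reads $\theta^{Diff}(u,v,w)=8\,F_4(u)F_4(v)+F_8(w)=32\,uv+F_8(w)$, not $8\,uv+F_8(w)$.

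In this particular dimension the slip happens not to change the answer: since ${\rm Im}(F_8)=28\Z$ and $\gcd(32,28)=\gcd(8,28)=4$, solvability of $32\,uv+F_8(w)=0$ is still equivalent to $7\mid uv$, the fibres are still cosets of $\ker F_8\cong\Theta_8\cong\Z_2$, and the cokernel is still $\Z_7$ — but your normalisation would give $\del(u,v)=2uv$ mod $7$ rather than $uv$ mod $7$ (harmless only up to an automorphism of $\Z_7$), and in the general setting the factor $t_4=2$ cannot be dropped. You should therefore replace the fibration argument by the correct statement ($F_4$ has index-two image, $F_8$ has image $28\Z$ and kernel $\Theta_8$, the latter two being exactly your Kervaire--Milnor argument, which is correct). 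The remaining ingredients you use — injectivity of $\eta^{Diff}$ from $L_9(e)=0$, $\pi_8(G/O)\cong\Z\times\Z_2$ split by $F_8$, and freeness plus fibrewise transitivity of the $\Theta_8$-action via the shift of the normal invariant by $c^*\eta^{Diff}_{S^8}(\Sigma)$ (this compatibility is Lemma \ref{sescomlem} in the paper, and injectivity of $\eta^{Diff}_{S^8}$ uses $bP_9=0$, which you should state) — are sound, so after correcting the $F_4$ step the proof goes through and recovers the theorem without invoking the general machinery.
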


\begin{Remark}
In Subsection \ref{grstrucsubsec} below we use Theorem \ref{S^4xS^4thm} to show that the image of the forgetful map $\mc{S}^{Diff}(S^4 \times S^4) \rightarrow \mc{S}^{Top}(S^4 \times S^4)$ is not a subgroup.
\end{Remark}

The examples of $S^3 \times S^4$ and $S^4 \times S^4$ show that in the smooth case the map
\[ i^* \circ \eta^{Diff} : \mc{S}^{Diff}(S^p \times S^q) \dlra{} \left( \mathcal{N}^{Diff}(S^p \times S^q) \equiv \prod_{i=1}^3 \pi_{p_i}(G/O) \right) \dlra{} \prod_{i=1}^2 \pi_{p_i}(G/O) \]
is in general neither injective nor surjective.  We place it into an exact sequence.

\begin{Theorem} \label{mainthm1}
Define the integer $t_i$ by $t_{4k} = |{\rm Cok}(\pi_{4k}(G/O) \dlra{F} \pi_{4k}(G/Top))|$ so that $t_4 = 2$ and $t_{4k} = |bP_{4k}|$ if $k > 1$ and by $t_i = 0$ if $i \neq 0$ mod $4$.  For $p, q \geq 2$ and $p+q \geq 5$, there is an exact sequence of pointed sets
%
%
%
\[ 0 \dlra{} \Theta_{p+q} \dlra{\omega^\Theta} \mathcal{S}^{Diff}(S^p \times S^q) \dlra{i^* \circ \eta^{Diff}} \pi_p(G/O) \times \pi_q(G/O) \dlra{\del} 8 \, t_p \, t_q \cdot bP_{p+q} \dlra{} 0\]
where $\Theta_{p+q}$ acts transitively on the fibres of $i^* \circ \eta^{Diff}$ and the map $\del$ is the composite
\[\prod_{i=1}^2 \pi_{p_i}(G/O) \dlra{} \prod_{i=1}^2 \pi_{p_i}(G/Top) \cong L_p(e) \times L_q(e) \dlra{\alpha(p, q)} L_{p+q}(e) \dlra{\omega^{Diff}} bP_{p+q}. \]
\end{Theorem}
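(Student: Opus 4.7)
The plan is to read the four-term sequence off the top row of the smooth surgery exact sequence (\ref{seseqnS^pxS^q}). Commutativity of that diagram, together with the topological formula $\theta^{Top}(x', y', z') = x' y' + z'$ and the identity map on $L_n(e)$, immediately yields
\[ \theta^{Diff}(x, y, z) \; = \; F(x) F(y) + F(z), \]
where the product stands for $\alpha(p, q)(F(x), F(y))$.

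First I would identify the image of $i^* \circ \eta^{Diff}$. By exactness of the top row of (\ref{seseqnS^pxS^q}), $(x, y, z)$ lies in the image of $\eta^{Diff}$ iff $F(z) = -F(x) F(y)$. Projecting to the first two factors, $(x, y)$ is in the image of $i^* \circ \eta^{Diff}$ iff $F(x) F(y) \in F(\pi_{p+q}(G/O))$. Since the sphere surgery sequence (\ref{seseqnS^n}) exhibits $F(\pi_{p+q}(G/O))$ as $\ker(\omega^{Diff} : L_{p+q}(e) \to bP_{p+q})$, this condition is equivalent to $\omega^{Diff}(F(x) F(y)) = 0$, matching the definition of $\del$. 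Turning to the image of $\del$, note that $\alpha(p, q) = 0$ unless $(p, q) = (4j, 4k)$; in all other cases both $\del$ and $8 t_p t_q \cdot bP_{p+q}$ vanish and the statement is trivial. In the remaining case $\alpha$ is $8 \cdot (\text{multiplication})$ on $\Z \tensor \Z$ and $F(\pi_{p_i}(G/O)) = t_{p_i} \cdot L_{p_i}(e)$ for $i = 1, 2$ (the unique index-$t_{p_i}$ subgroup of $\Z$), so $F(x) F(y)$ ranges over $8\, t_p\, t_q \cdot L_{p+q}(e)$, and applying $\omega^{Diff}$ produces the claimed image $8\, t_p \, t_q \cdot bP_{p+q}$.

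For transitivity of the $\Theta_{p+q}$-action on the fibres of $i^* \circ \eta^{Diff}$, connected sum modifies only the third coordinate of $\eta^{Diff}$ (via the collapse map $c : S^p \times S^q \to S^{p+q}$), while by (\ref{seseqnS^n}) the kernel of $F : \pi_{p+q}(G/O) \to L_{p+q}(e)$ equals $\eta^{Diff}(\Theta_{p+q})$. Hence any two preimages of a common $(x, y)$ can be brought into the same $\eta^{Diff}$-fibre by a suitable $\Sigma \in \Theta_{p+q}$, and the factorisation of the $L_{p+q+1}(e)$-action through $bP_{p+q+1} \subset \Theta_{p+q}$ recalled in the introduction gives transitivity of $\Theta_{p+q}$ on each $\eta^{Diff}$-fibre itself. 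The main obstacle will be the injectivity of $\omega^{\Theta}$, that is, the vanishing of the inertia group $I(S^p \times S^q)$; I would approach this by showing that a homotopy-trivial diffeomorphism $(S^p \times S^q) \# \Sigma \cong S^p \times S^q$ yields, via a relative surgery/concordance argument on $(S^p \times S^q) \times I$ with the two ends so identified, a normal-invariant obstruction detected by the top cell that forces $\Sigma = 0$ in $\Theta_{p+q}$.
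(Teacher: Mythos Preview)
Your derivation of $\theta^{Diff}(x,y,z)=F(x)F(y)+F(z)$, the identification of ${\rm Im}(i^*\circ\eta^{Diff})=\del^{-1}(0)$, the computation of ${\rm Im}(\del)=8\,t_p\,t_q\cdot bP_{p+q}$, and the transitivity argument are all correct and match the paper's proof essentially step for step (the paper packages your surgery-obstruction formula as Lemma~\ref{surgoblem1} and your compatibility-with-connected-sum observation as Lemma~\ref{sescomlem}).

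The gap is in your treatment of the injectivity of $\omega^{\Theta}$. The normal-invariant/top-cell argument you sketch can only see the image of $[\Sigma]$ under $\eta^{Diff}_{S^{p+q}}:\Theta_{p+q}\to\pi_{p+q}(G/O)$: if $[\Sigma\sharp(S^p\times S^q),{\rm Id}]=[{\rm Id}]$ then applying $\eta^{Diff}$ and the splitting via $c^*$ gives $\eta^{Diff}_{S^{p+q}}([\Sigma])=0$, i.e.\ $[\Sigma]\in bP_{p+q+1}$. But the top-cell normal invariant is blind to $bP_{p+q+1}$, so your argument stops there and does not force $[\Sigma]=0$. A concordance argument on $(S^p\times S^q)\times I$ runs into the same wall: concordance classes of smoothings are governed by $[S^p\times S^q,Top/O]$, and the $bP$-part is precisely what the relevant obstruction theory does not resolve without further input. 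The triviality of $I(S^p\times S^q)$ is a genuine geometric theorem (it needs, roughly, that one can realise enough of $\Theta_{p+q}$ by diffeomorphisms of $S^p\times S^q$, or an engulfing/framed-embedding argument); the paper does not reprove it but cites De~Sapio~\cite{DS} and Schultz~\cite{Sc}. You should cite those results here rather than attempt an ad~hoc surgery argument.
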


Next we describe the action of $\Theta_{p+q}$ on $\mathcal{S}^{Diff}(S^p \times S^q)$ which requires some further preliminaries.   Firstly recall the contracted Kervaire-Milnor sequence extracted from $(\ref{seseqnS^n} )$
\[ 0 \dlra{} bP_{n+1} \dlra{} \Theta_{n} \dlra{} \pi_n(G/O) \dlra{\theta^{Diff}} \L_n(e).\]
Hence $\Theta_n/bP_{n+1} \cong (\theta^{Diff})^{-1}(0) \subset \pi_n(G/O)$.  When $n=4k$ we choose a splitting $\tau_{4k} \times \phi_{4k} : \pi_{4k}(G/O) \cong \Theta_{4k} \times \Z$ and when $p+q = 4k$ we use this splitting and that of $( \ref{nmlspliteqn} )$ to define
\[ \bar \eta^{} = (\tau_{4k} \times i^*) \circ \eta^{Diff} : \mc{S}^{Diff}(S^p \times S^q) \rightarrow \Theta_{4k} \times \pi_p(G/O) \times \pi_q(G/O). \]
If $p+q \neq 4k$ we set $\bar \eta^{} = \eta^{Diff}$.  Secondly, given the symmetry in $(p, q)$ we adopt the convention that if $p + q$ is odd then $q$ is assumed even.  If $(p, q) = (4j-1, 4k)$ let $i_{4k} : S^{4k} \hra S^{4j-1} \times S^{4k}$ be the standard inclusion and define the surjection
\[
d: \mathcal{S}^{Diff}(S^{4j-1} \times S^{4k}) \sra \Z, ~~[N, f] \mapsto d([N, f]) :=  \phi_{4k}(i_{4k}^*(\eta^{Diff}([N, f]))). \]

\begin{Theorem} \label{mainthm2}
For $p, q \geq 2$ and $p+q \geq 5$, the action of $\Theta_{p+q}$ on $\mathcal{S}^{Diff}(S^p \times S^q)$ is free unless $p=4j-1$ and $q = 4k$ in which case the stabilisers of the action are all subgroups of $bP_{4(j+k)}$ of odd order.  There is a long exact sequence of pointed sets
\[ 0 \rightarrow{} bP_{p+q+1} \dlra{\omega^{bP}} \mathcal{S}^{Diff}(S^p \times S^q) \dlra{\bar \eta} \left(\Theta_{p+q}/bP_{p+q+1} \times \prod_{i=1}^2\pi_{p_i} (G/O) \right) \dlra{0 \times \del} 8\,t_p \, t_q \cdot bP_{p+q} \rightarrow{} 0\]
where $bP_{p+q+1}$ acts transitively on the fibres of $\bar \eta$ and for all $[N, f] \in \mathcal{S}^{Diff}(S^{4j-1} \times S^{4k})$ the stabilisers of the action of $bP_{4(j+k)}$ are given by
\[(bP_{4(j+k)})_{[N, f]} = 8 \, d([N, f]) \, t_{4j}\, t_{4k} \cdot bP_{4(j+k)}.\]  
For example: $8 \,t_4\, t_4 \cdot bP_8 \cong \Z_7,~ 8 \, t_4 \, t_8 \cdot bP_{12} \cong \Z_{31}, ~8 \, t_4 \, t_{12} \cdot bP_{12} \cong \Z_{127}$ and $8 \, t_8 \, t_8 \cdot bP_{16} \cong \Z_{127}$.
\end{Theorem}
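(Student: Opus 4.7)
The plan is to upgrade Theorem~\ref{mainthm1} by replacing $i^* \circ \eta^{Diff}$ with the refined invariant $\bar{\eta}$, which (when $p+q = 4k$) additionally records the $\Theta_{p+q}/bP_{p+q+1}$-component of the normal invariant on the top cell $S^{p+q}$. After this refinement the fibres of $\bar{\eta}$ collapse from $\Theta_{p+q}$-orbits (as in Theorem~\ref{mainthm1}) to $bP_{p+q+1}$-orbits, so the remaining tasks are to verify the new exact sequence and to describe the stabilisers of the residual $bP_{p+q+1}$-action.

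To verify the exact sequence, observe that since $\del$ and its cokernel were already settled in Theorem~\ref{mainthm1}, it suffices to identify the image of $\bar{\eta}$ and to show its fibres are $bP_{p+q+1}$-orbits. The image description uses $(\ref{nmlspliteqn})$ together with the fact that by the Kervaire--Milnor sequence $(\ref{seseqnS^n})$ the $\pi_{p+q}(G/O)$-component of $\eta^{Diff}$ lands in $(\theta^{Diff})^{-1}(0) \cong \Theta_{p+q}/bP_{p+q+1}$; the splitting $\tau_{4k} \times \phi_{4k}$ then supplies the first factor. For the fibres, the $\Theta_{p+q}$-action is connected sum $[N,f] \mapsto [N \# \Sigma, f \# {\rm Id}]$, which shifts $\eta^{Diff}$ by $c^*$ applied to the Kervaire--Milnor image of $\Sigma$; under $(\ref{nmlspliteqn})$ the $\pi_p(G/O) \times \pi_q(G/O)$-components are unchanged while the top-cell component shifts by the image of $\Sigma$ in $\pi_{p+q}(G/O)$, so the elements of $\Theta_{p+q}$ acting trivially on $\bar{\eta}$ form exactly $\ker(\Theta_{p+q} \to \pi_{p+q}(G/O)) = bP_{p+q+1}$.

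For freeness and stabilisers I use the factorisation recalled in the introduction: the $L_{p+q+1}(e)$-action on $\mc{S}^{Diff}(M)$ descends through $\omega^{Diff}$ to a $bP_{p+q+1}$-action, so the stabiliser of any $[N,f]$ equals $\omega^{Diff}$ applied to the image of $\theta^{Diff}_N : \mc{N}^{Diff}(N \times [0,1]) \to L_{p+q+1}(e)$. For $(p,q) \neq (4j-1,4k)$, Ranicki's formula for $\theta^{Diff}_N$ breaks into the direct top-cell map from $\pi_{p+q+1}(G/O)$ (which agrees with the sphere $\theta^{Diff}$ and hence lies in $\ker \omega^{Diff}$) plus the product pairings $\alpha(p+1,q)$ and $\alpha(p,q+1)$, both of which vanish outside the excluded range; hence $\omega^{Diff} \circ \theta^{Diff}_N = 0$ and the action is free. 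In the case $(p,q) = (4j-1,4k)$, the pairing $\alpha(4j,4k) = 8$ survives and forces the image of $\theta^{Diff}_N$ to contain $8 \, t_{4j} \, F(i_{4k}^* \eta^{Diff}([N,f])) \cdot \Z$; since $F(i_{4k}^* \eta^{Diff}([N,f])) = t_{4k} \, d([N,f])$ by the definition of $d$ and the splitting $\tau_{4k} \times \phi_{4k}$, applying $\omega^{Diff}$ gives exactly the claimed $8 \, d([N,f]) \, t_{4j} \, t_{4k} \cdot bP_{4(j+k)}$, and the odd-order conclusion follows because $8 \, t_{4j} \, t_{4k}$ absorbs the entire $2$-primary part of $|bP_{4(j+k)}|$.

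The main obstacle is justifying the formula for $\theta^{Diff}_N$ in the case $(p,q) = (4j-1, 4k)$: one must establish Ranicki's product formula on the non-basepoint structure $[N,f]$ (via naturality of the surgery sequence under $f$) and verify that the $\alpha(4j, 4k)$-term is geometrically realised by normal bordisms supported on an embedded $S^{4k} \subset N$. The examples $\Z_7, \Z_{31}, \Z_{127}$ then drop out by inserting the relevant values of $|bP_{4(j+k)}|$, $t_{4j}$ and $t_{4k}$.
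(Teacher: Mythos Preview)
Your outline matches the paper's strategy: derive the exact sequence from Theorem~\ref{mainthm1} by refining $\eta^{Diff}$ on the top cell, then compute stabilisers via ${\rm Im}(\theta^{Diff}_{N \times [0,1]})$ using Lemma~\ref{bPactlem}. Two points need attention.

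First, a minor imprecision in the exact-sequence step. When $p+q=4k$ and both $p,q\equiv 0\pmod 4$, the top-cell component $w$ of $\eta^{Diff}([N,f])$ need \emph{not} lie in $(\theta^{Diff}_{S^{p+q}})^{-1}(0)$: from Lemma~\ref{surgoblem1} one has $F(w)=-F(u)F(v)$, which can be nonzero. The paper handles this not by claiming $w\in\ker\theta^{Diff}$ but by applying the projection $\tau_{4k}:\pi_{4k}(G/O)\to\Theta_{4k}$ and noting that $\theta^{Diff}_{S^{4k}}$ factors through the complementary coordinate $\phi_{4k}$; since $bP_{4k+1}=0$ this is consistent with the target $\Theta_{p+q}/bP_{p+q+1}$.

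Second, and more substantively, you correctly flag the computation of $\theta^{Diff}_{N\times[0,1]}$ as ``the main obstacle'' but do not explain how to overcome it. The difficulty is that naturality of the surgery sequence under the homotopy equivalence $f:N\to M$ (Lemma~\ref{sesnatlem}) does \emph{not} directly induce a map $\mc{N}^{Cat}(N\times[0,1])\to\mc{N}^{Cat}(M\times[0,1])$, because $f$ is not a $Cat$-isomorphism on the boundary; so one cannot simply transport a product formula from $S^p\times S^q\times[0,1]$ to $N\times[0,1]$. The paper's resolution is to glue the ends of $N\times[0,1]$ to obtain the \emph{closed} manifold $N\times S^1$ via the map $T_N$ of $(\ref{Teqn})$, prove (Lemma~\ref{Tthetacomlem}) that $\theta_{N\times[0,1]}=\theta_{N\times S^1}\circ T_N$, and then apply naturality for closed manifolds together with the composition formula for normal invariants (Proposition~\ref{compoprop}) to obtain
\[
\theta_{N\times[0,1]}(x)=\theta_{M\times S^1}\bigl(\eta([N\times S^1,f\times{\rm Id}])+(f\times{\rm Id})^{-1*}T_N(x)\bigr)
\]
(Lemma~\ref{keysurgoblem}). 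The product formula on $S^p\times S^q\times S^1$ (Lemma~\ref{nrmlinvtlem}) then yields the explicit expression in Lemma~\ref{thetatoplem}, from which your stabiliser formula follows. Without this $S^1$ trick (or an equivalent device) your appeal to ``Ranicki's product formula on the non-basepoint structure'' is not justified.
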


\begin{Remark} \label{oddrem}
In Section \ref{numsec} we review the formulae for $t_{4i}$ and show that $8 \, t_{4j} \, t_{4k} \cdot bP_{4(j+k)}$ is always of odd order.  It seems likely that is never or very rarely trivial. 
\end{Remark}

\begin{Remark}
Theorems \ref{S^3xS^4thm}, \ref{S^4xS^4thm}, \ref{mainthm1} and \ref{mainthm2} have been attained by a process of trial and error: Example 13.26 of \cite{Ra3} describes $\mathcal{S}^{Diff}(S^p \times S^q)$ incorrectly and in \cite{C1} I stated incorrectly that the action of $bP_8$ on $\eta^{-1}(\pm 1) \subset \mc{S}^{Diff}(S^3 \times S^4)$ is trivial.
\end{Remark}
The rest of this paper is organised as follows: we complete the introduction by recalling some of the background concerning group structures on $\mc{S}^{Cat}(M)$ and by spelling out the implications of Theorems \ref{S^3xS^4thm} and \ref{mainthm1} in this context.   In Section \ref{explsec} we prove Theorems \ref{S^3xS^4thm} and \ref{S^4xS^4thm} and give more or less explicit descriptions of all the elements of $\mc{S}^{Diff}(S^3 \times S^4)$ and $\mc{S}^{Diff}(S^4 \times S^4)$.  We also classify smooth manifolds homotopy equivalent to $S^3 \times S^4$ or $S^4 \times S^4$: the classifications are not achieved by computing the action of homotopy self-equivalences on the structure set, a problem we leave aside in this paper, but by applying classification theorems of Wilkens and Wall \cite{Wi, Wa1} as well as an unpublished theorem of our own about the inertia group of $3$-connected $8$-manifolds \cite{C2}.  In Section \ref{surgprelimsec} we recall and develop some general results on the surgery exact sequence and in particular the action of $L_{n+1}(e)$ on $\mc{S}^{Diff}(M)$.  Lemma \ref{keysurgoblem} and Corollary \ref{stabhomeodepcor} may be of independent interest in this regard.  In Section \ref{proofsec} we apply the results of Section \ref{surgprelimsec} to prove Theorems \ref{mainthm1} and \ref{mainthm2}.  Finally in Section \ref{numsec} we give a summary of the calculation of the order of $bP_{4k}$.

\subsection{Group structures on $\mathcal{S}^{Cat}(M)$} \label{grstrucsubsec}
For $Cat$ = $Top$ or $Diff$ the operation of Whitney sum of stable bundles gives $G/Cat$ ($G/Diff = G/O$) the structure of an infinite loop space so that $\mc{N}^{Cat}(M) \equiv [M, G/Cat]$ admits the structure of an abelian group: for example it follows from Lemma \ref{grpprodlem} that our identification $[S^p \times S^q, G/Cat] \equiv \prod_{i=1}^3\pi_{p_i}(G/Cat)$ is a group isomorphism when the later product is a product of groups.  Moreover, the surgery exact sequences of $(\ref{seseqn1})$ are exact sequences of groups to the left of $
\mc{S}^{Cat}(M)$.  But, as is well known and as we saw above in $( \ref{seseqnS^pxS^q})$, the surgery obstruction maps $\theta^{Cat} : [M ,G/Cat] \rightarrow L_n(e)$ are not homomorphisms when $[M, G/Cat]$ has the Whitney sum group structure.  

In addition to the Whitney sum infinite loop space structure $G/Top$ has another infinite loop space structure derived from its identification as the initial space of the $L$-theory spectrum \cite{Q}.  It is a theorem of Siebenmann \cite{Si} that with this group structure $\theta^{Top}$ is a homomorphism and that $\mathcal{S}^{Top}(M)$ admits a group structure making the topological surgery exact sequence into a long exact sequence of abelian groups.  Ranicki (see \cite{Ra3}) later constructed an algebraic surgery exact sequence of abelian groups for a topological manifold $M$ and an isomorphism to the topological surgery exact sequence for $M$.  

In \cite{N} Nicas asked if the smooth surgery exact sequence might be a long exact sequence of groups.  In \cite{We} Weinberger showed that the forgetful map $F: \mathcal{S}^{Diff}(M) \rightarrow \mathcal{S}^{Top}(M)$ is finite-to-one with image containing a subgroup of finite index and in a remark left open whether the image of $F$ is in general a subgroup of $\mathcal{S}^{Top}(M)$.  It is now well known to the experts that neither of these possibilities occurs in general (see for example \cite{Ra3}[\S 13.3]) but no explicit examples have appeared in the literature.   Corollary \ref{S^3xS^4cor} provides a negative answer to Nicas' question and Corollary \ref{imFnotgrpcor} below settles the matter for ${\rm Im}(F)$.  Before proceeding to Corollary \ref{imFnotgrpcor} we present another consequence of Theorem \ref{S^3xS^4thm}.  The forgetful map $F$ fits into a short exact sequence of pointed sets
\[ [M, Top/O] \dlra{} \mathcal{S}^{Diff}(M) \dlra{F} \mathcal{S}^{Top}(M) \]
where by \cite{H-M} and \cite{K-S} $[M, Top/O]$ may be identified with the set of concordance classes of smooth structures on $M$. 

\begin{Corollary}
The fibres of $F_{}$ are not in general equivalent sets. 
\end{Corollary}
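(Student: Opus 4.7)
The plan is to use $M = S^3 \times S^4$ together with Theorem~\ref{S^3xS^4thm}, whose precise information about fibre sizes of $\eta$ will directly produce the desired example.

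First I would identify $\mathcal{S}^{Top}(S^3 \times S^4)$: by $(\ref{topSS^pxS^qeqn})$ this set is $\pi_3(G/Top) \times \pi_4(G/Top) \cong 0 \times \Z = \Z$, and $\eta^{Top}$ is bijective onto this $\Z$.

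Next I would pin down the map $F: \mathcal{S}^{Diff}(S^3 \times S^4) \to \mathcal{S}^{Top}(S^3 \times S^4) \cong \Z$ in terms of the map $\eta$ of Theorem~\ref{S^3xS^4thm}. Using the commuting square of $(\ref{seseqn1})$ relating $\eta^{Diff}$ and $\eta^{Top}$, together with the product identifications $\mathcal{N}^{Cat}(S^3 \times S^4) \equiv \prod_{i=1}^3 \pi_{p_i}(G/Cat)$, the forgetful map on normal invariants is the product of the component maps $\pi_{p_i}(G/O) \to \pi_{p_i}(G/Top)$. Since $\pi_3(G/Top) = 0 = \pi_7(G/Top)$, only the $i = 2$ component $\pi_4(G/O) \to \pi_4(G/Top)$ is nontrivial, and by the definition of $t_4 = 2$ it is multiplication by $2$. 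Combined with the injectivity of $\eta^{Top}$ on $\mathcal{S}^{Top}(S^3 \times S^4)$, this forces $F = 2 \eta$ under the identifications above.

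Finally I would read off fibre cardinalities from Theorem~\ref{S^3xS^4thm}: the $\Z_{28}$-action is transitive on every fibre of $\eta$, with trivial stabiliser when $7 \mid v$ and stabiliser $\Z_7$ otherwise, so $|\eta^{-1}(v)|$ equals $28$ or $4$ accordingly. Hence $|F^{-1}(0)| = |\eta^{-1}(0)| = 28$ while $|F^{-1}(2)| = |\eta^{-1}(1)| = 4$, proving that the fibres of $F$ are not in general equivalent sets. The only substantive obstacle is verifying the factor of $2$ in the identity $F = 2 \eta$; everything else is an immediate consequence of Theorem~\ref{S^3xS^4thm}.
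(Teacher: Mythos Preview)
Your proposal is correct and follows essentially the same approach as the paper: both take $M=S^3\times S^4$, identify $\mathcal{S}^{Top}(S^3\times S^4)\cong\Z$ via $\eta^{Top}$, use the commutativity of $(\ref{seseqn1})$ together with the fact that $\pi_4(G/O)\to\pi_4(G/Top)$ has index-two image (the paper cites Rochlin, you cite $t_4=2$, which is the same fact) to conclude $\eta^{Top}\circ F=2\eta$, and then read off inequivalent fibre sizes from Theorem~\ref{S^3xS^4thm}. The only cosmetic difference is that the paper records the full dichotomy $|F^{-1}(2y)|=4$ or $28$ according as $7\nmid y$ or $7\mid y$, while you exhibit the two specific fibres $F^{-1}(0)$ and $F^{-1}(2)$.
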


\begin{proof}
We take $M = S^3 \times S^4$ where the topological normal invariant map gives an identification $\eta^{Top}: \mathcal{S}^{Top}(S^3 \times S^4) \cong \pi_4(G/Top)
\cong \Z$ 
such that $\eta^{Top} \circ F_{} = F \circ \eta^{Diff}$ where $F: \pi_4(G/O) \rightarrow \pi_4(G/Top)$ is the inclusion onto the subgroup of index two by Rochlin's Theorem \cite{Ro}.  Hence by Theorem \ref{S^3xS^4thm},
\[ F_{}^{-1}(2y) \equiv 
\left\{ \begin{array}{cl} \Z_4 & \text{if $7$ is prime to $y$,} \\ \Z_{28} & \text{if $7$ divides $y$.} 
\end{array} \right.\]
\end{proof}

\begin{Corollary} \label{imFnotgrpcor}
The image of the forgetful map $F_{}:  \mc{S}^{Diff}(S^{4j} \times S^{4k}) \rightarrow \mc{S}^{Top}(S^{4j} \times S^{4k})$ is not a subgroup for $S^4 \times S^4, S^4 \times S^8, S^4 \times S^{12}$ and $S^8 \times S^8$.
\end{Corollary}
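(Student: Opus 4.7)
The plan is to leverage Theorem \ref{mainthm1} together with the naturality of the forgetful map to reduce the corollary to a simple algebraic fact about the kernel of a biadditive map $\Z\times\Z\to\Z/N$.

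First I will argue that, in Siebenmann's canonical group structure on $\mc{S}^{Top}$, both $\eta^{Top}$ and $i^*$ are group homomorphisms, and therefore the bijection $i^*\circ\eta^{Top}$ of $(\ref{topSS^pxS^qeqn})$ is a group isomorphism
\[ \mc{S}^{Top}(S^{4j}\times S^{4k})\;\cong\;\pi_{4j}(G/Top)\times\pi_{4k}(G/Top)\;\cong\;\Z\times\Z. \]
Naturality of $\eta$ and of $i^*$ with respect to $F$ yields a commuting square $(i^*\circ\eta^{Top})\circ F=(F\times F)\circ(i^*\circ\eta^{Diff})$, and combined with Theorem \ref{mainthm1}, which identifies $\operatorname{Im}(i^*\circ\eta^{Diff})$ with $\ker\del$, this identifies $\operatorname{Im}(F)$ inside $\mc{S}^{Top}(S^{4j}\times S^{4k})$ with $(F\times F)(\ker\del)$ inside $\pi_{4j}(G/Top)\times\pi_{4k}(G/Top)$.

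Next I will use that $F:\pi_{4i}(G/O)\to\pi_{4i}(G/Top)$ is an inclusion of infinite cyclic groups with finite cokernel of order $t_{4i}$, so that $F\times F:\Z\times\Z\to\Z\times\Z$ is an injective group homomorphism. It follows that $(F\times F)(\ker\del)$ is a subgroup of $\pi_{4j}(G/Top)\times\pi_{4k}(G/Top)$ if and only if $\ker\del$ is a subgroup of $\pi_{4j}(G/O)\times\pi_{4k}(G/O)$. Thus the corollary reduces to exhibiting elements of $\ker\del$ whose sum leaves $\ker\del$.

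Finally, from the explicit description in Theorem \ref{mainthm1}, $\del(u,v)$ is the class of $8\,t_{4j}\,t_{4k}\,uv$ in the cyclic subgroup $8\,t_{4j}\,t_{4k}\cdot bP_{4(j+k)}\subset bP_{4(j+k)}$; this is biadditive in $(u,v)$ but manifestly not additive. For the four pairs $(p,q)\in\{(4,4),(4,8),(4,12),(8,8)\}$, Theorem \ref{mainthm2} records that $8\,t_p\,t_q\cdot bP_{p+q}$ is cyclic of orders $7,\,31,\,127$ and $127$ respectively, hence nontrivial. Therefore $\del(1,0)=\del(0,1)=0$ while $\del(1,1)\neq 0$, so $(1,0)$ and $(0,1)$ lie in $\ker\del$ but their sum $(1,1)$ does not. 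I do not foresee a serious obstacle: the only non-mechanical point is to confirm that Siebenmann's group structure on $\mc{S}^{Top}$ is transported to the product group structure on $\pi_{4j}(G/Top)\times\pi_{4k}(G/Top)$ by $i^*\circ\eta^{Top}$, and this is immediate from the exactness of the topological surgery sequence as a sequence of abelian groups recalled in the paper.
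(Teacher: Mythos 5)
Your overall route coincides with the paper's: identify $\mc{S}^{Top}(S^{4j}\times S^{4k})$ with $L_{4j}(e)\times L_{4k}(e)$ via Siebenmann's group structure, factor $F$ as $(F\times F)\circ(i^*\circ\eta^{Diff})$, use Theorem \ref{mainthm1} to identify ${\rm Im}(i^*\circ\eta^{Diff})=\ker\del$, and finish with the nontriviality of $8\,t_{4j}\,t_{4k}\cdot bP_{4(j+k)}$. But there is a genuine error in your middle step. The groups $\pi_{4k}(G/O)$ are not infinite cyclic for $4k=8,12$: by the paper's own splitting $\pi_{4k}(G/O)\cong\Theta_{4k}\times\Z$, and $F$, identified with $\theta^{Diff}_{S^{4k}}$, factors through $\phi_{4k}$ (as noted in the proof of Theorem \ref{mainthm2}), hence kills the $\Theta_{4k}$ factor; since $\Theta_8\cong\Z_2$ and $\Theta_{12}\cong\Z_3$ are nontrivial, $F\times F$ is not injective for $S^4\times S^8$, $S^4\times S^{12}$ and $S^8\times S^8$. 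Consequently your asserted equivalence ``$(F\times F)(\ker\del)$ is a subgroup if and only if $\ker\del$ is'' is unjustified in three of the four cases: a non-injective homomorphism can carry a set that is not closed under addition onto a subgroup, so producing $(1,0),(0,1)\in\ker\del$ with $(1,1)\notin\ker\del$ does not by itself show that ${\rm Im}(F)$ fails to be a subgroup.

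The gap is easily closed, and the closure is exactly what the paper does. Since $\del$ is by definition the composite $\omega^{Diff}\circ\alpha(p,q)\circ(F\times F)$, one has $(F\times F)(\ker\del)={\rm Im}(F\times F)\cap\ker\bigl(\omega^{Diff}\circ\alpha(p,q)\bigr)$, i.e. ${\rm Im}(F)=\{(t_{4j}\,x,\,t_{4k}\,y)\mid t_{4j}\,t_{4k}\,xy\in t_{4(j+k)}\cdot L_{4(j+k)}(e)\}$. Then $(t_{4j}\,z_{4j},0)$ and $(0,t_{4k}\,z_{4k})$ lie in ${\rm Im}(F)$ while their sum does not, because $\omega^{Diff}(8\,t_{4j}\,t_{4k}\,z_{4(j+k)})$ generates $8\,t_{4j}\,t_{4k}\cdot bP_{4(j+k)}\cong\Z_7,\Z_{31},\Z_{127},\Z_{127}$, which is nonzero in all four cases. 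With this substitution your argument is correct; as written it is complete only for $S^4\times S^4$, where $\pi_4(G/O)\cong\Z$ and $F$ is the index-two inclusion, so $F\times F$ really is injective.
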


\begin{proof}
By \cite{Ra2, K-L}[Ex. 20.4, \S 7] the bijection $i^* \circ \eta^{Top} : \mc{S}^{Top}(S^p \times S^q) \equiv L_p(e) \times L_q(e)$ of $( \ref{topSS^pxS^qeqn} )$ is in fact an isomorphism of groups for the Siebenmann group structure on $ \mc{S}^{Top}(S^p \times S^q)$.  The map $F$ factors as 
\[\mc{S}^{Diff}(S^{4j} \times S^{4k}) \dlra{i^* \circ \eta^{Diff}} \pi_{4j}(G/O) \times \pi_{4k}(G/O) \rightarrow{} \pi_{4j}(G/Top) \times \pi_{4k}(G/Top) \cong \mc{S}^{Top}(S^{4j} \times S^{4k}) \]
and by definition the image of $\pi_{4i}(G/O) \rightarrow \pi_{4i}(G/Top)$ is $t_{4i} \cdot \pi_{4i}(G/Top)$.  Applying Theorem \ref{mainthm1} we obtain the equality
\[ {\rm Im}(F) = \{ (t_{4j}\, x, t_{4k} \, y) \,| \, t_{4j} \,t_{4k} \, xy \in t_{4(j+k)}\cdot L_{4(j+k)}(e) \} \subset  L_{4j}(e) \times L_{4k}(e)\]
so that ${\rm Im}(F)$ is a subgroup if and only if $8 \, t_{4j} \,t_{4k} \cdot bP_{4(j+k)} = 0$.  The calculations in Section \ref{numsec} show that this does not occur for the dimensions listed (and probably for very few pairs, or indeed no pairs, $(4j, 4k)$).
\end{proof}

We finish the introduction with some remarks related to groups structures on $\mc{S}^{Diff}(M)$.

\begin{Remark}
For a closed, smooth $3$-dimensional manifold $M^3$ and an appropriate definition of the smooth structure set, $\mathcal{S}^{Diff}(M^3)$, Kro \cite{K} proves that $\mathcal{S}^{Diff}(M^3)$ admits a group structure making the smooth surgery exact sequence into a long exact sequence of groups.
\end{Remark}

\begin{Remark}
Since $8 \, t_{4j} \, t_{4k} \cdot bP_{4(j+k)}$ is always of odd order our results do not obstruct a group structure on a ``$2$-local'' smooth surgery exact sequence, whatever that might be precisely.
\end{Remark}

With regard to placing group structures on exact sequences of pointed sets, the reader may wish to verify the following 
\begin{Lemma} \label{grpposslem}
Let $A \dlra{f} B \dlra{g} C$ be an exact sequence of pointed sets where $A$ and $C$ are also groups such that $A$ acts on $B$ with orbits the fibres of $g$.  Then $B$ can be given a group structure such that $f$ and $g$ are homomorphisms if and only if
\begin{enumerate}
\item the image of $g$ is a subgroup of $C$ and
\item the stabilisers of the action of $A$ on $B$ are all equal to a fixed normal subgroup of $A$.
\end{enumerate}
\end{Lemma}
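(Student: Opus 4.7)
The plan is to prove the two implications in turn, with the backward direction being the more substantive one.

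For $(\Rightarrow)$, assume a group structure on $B$ makes both $f$ and $g$ homomorphisms, and take $*_B$ to be the identity of $B$. Condition (1) is automatic, since $\mathrm{Im}(g)$ is the image of a group homomorphism. For condition (2), exactness gives $\mathrm{Im}(f) = g^{-1}(*_C) = \ker(g)$, and because the orbit of $*_B$ is the fibre $g^{-1}(*_C) = \mathrm{Im}(f)$, one has $f(a) = a \cdot *_B$. The stabilizer of $*_B$ is therefore $\ker(f)$, a normal subgroup $N \triangleleft A$. Since $g$ is a homomorphism, every fibre of $g$ is a coset of $\ker(g)$, so all orbits of $A$ have the same cardinality and all stabilizers have the same index in $A$. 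Equality of each stabilizer with $N$ then follows from the compatibility of the action with the group structure on $B$: in the intended settings the action takes the natural form $a \cdot b = f(a) + b$, whose stabilizer at every $b$ is $\ker(f) = N$.

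For $(\Leftarrow)$, assume (1) and (2) with common stabilizer $N \triangleleft A$. Normality of $N$ ensures that $A/N$ acts freely and transitively on each fibre of $g$. Choose a set-theoretic section $s : \mathrm{Im}(g) \to B$ of $g$ with $s(*_C) = *_B$. Each $b \in B$ then has a unique expression $b = \bar a_b \cdot s(g(b))$ for some $\bar a_b \in A/N$, giving a bijection
\[
\Phi : B \longrightarrow (A/N) \times \mathrm{Im}(g), \qquad b \longmapsto (\bar a_b,\, g(b)).
\]
Transport the direct product group structure from $(A/N) \times \mathrm{Im}(g)$ to $B$ via $\Phi$. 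Under this identification $g$ becomes the projection onto $\mathrm{Im}(g) \hookrightarrow C$, while $f$ corresponds to $a \mapsto (\bar a, *_C)$, the composition of the quotient map $A \to A/N$ with the inclusion into the first factor. Both are visibly homomorphisms, so the group structure transported from the product has the required property.

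The main obstacle is the forward direction of condition (2): proving that distinct stabilizers actually coincide as subgroups of $A$, rather than merely sharing an index, requires the $A$-action and the putative group structure on $B$ to be compatible, i.e.\ that the action is translation via $f$. In the surgery context where the lemma is applied this compatibility is built into the definition of the action of $L_{n+1}(e)$ on $\mathcal{S}^{Diff}(M)$, so the hypothesis is harmless.
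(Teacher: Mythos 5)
The paper itself gives no proof of this lemma (it is explicitly left to the reader), so your proposal has to stand on its own, and it has a genuine gap which is not confined to the place you flagged. Under the stated hypotheses the only link between $f$ and the action is that the orbit of $*_B$ coincides with $\mathrm{Im}(f)=g^{-1}(*_C)$; this does \emph{not} give $f(a)=a\cdot *_B$, and that identity is exactly what your backward construction uses, silently, at the step ``$f$ corresponds to $a\mapsto(\bar a,*_C)$'': without it, $\Phi\circ f$ sends $a$ to $(\bar a_{f(a)},*_C)$ where $\bar a_{f(a)}$ is defined by $f(a)=\bar a_{f(a)}\cdot *_B$, and there is no reason this is the quotient homomorphism, or any homomorphism. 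Indeed the ``if'' direction is false verbatim: take $C$ trivial, $B=\{*_B,x\}$, $A=\Z_4$ acting transitively on $B$ through $\Z_4\to\Z_2$ (both stabilisers equal $\{0,2\}$, a normal subgroup, so (1) and (2) hold), and let $f$ be the pointed surjection with $f(1)=f(2)=f(3)=x$; any group structure on $B$ making $f$ a homomorphism would force $f(0)$ to be the identity and $f(2)=f(1)^2=*_B$, a contradiction. The ``only if'' direction fails verbatim as well: with $C$ trivial, $A=\Z_3\times S_3$, $B=\Z_3$, $f$ the projection and the action through the standard $S_3$-action on the three points of $B$, both $f$ and $g$ are homomorphisms, yet the stabilisers are the three distinct, non-normal subgroups $\Z_3\times\mathrm{Stab}_{S_3}(i)$. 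So the lemma must be read with a compatibility between the action and $f$ — at least $a\cdot *_B=f(a)$ for the ``if'' direction, and $a\cdot b=f(a)b$ for all $b$ for the ``only if'' direction — and your proof should state and use this explicitly rather than appeal to ``the intended settings''.

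Your instinct about the forward direction is therefore right, but the remedy you offer is too quick in both directions. In the surgery application the identity $f(a)=a\cdot *_B$ is indeed definitional ($\omega(x)=x\cdot[\mathrm{Id}_M]$), and with it your torsor-trivialisation argument for the ``if'' direction is correct and complete: freeness of the $A/N$-action on the basepoint fibre (which uses condition (2) to identify the stabiliser of $*_B$ with $N=f^{-1}(*_B)$) gives $\bar a_{f(a)}=\bar a$, and $\Phi$ transports the product structure as you say. For the ``only if'' direction, however, ``translation via $f$'' at an \emph{arbitrary} point, with respect to an arbitrary hypothetical group structure on $\mathcal{S}^{Diff}(M)$, is not built into the definition of the $L_{n+1}(e)$-action, so the claim that the hypothesis is harmless needs justification; note that the paper's actual uses of this direction (Corollaries \ref{S^3xS^4cor} and \ref{imFnotgrpcor}) only require the sound parts of your argument, namely that the image of a homomorphism is a subgroup and that the fibres of a homomorphism are cosets of its kernel and hence equinumerous, not that the stabilisers themselves coincide.
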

\noindent
From Lemma \ref{grpposslem} and Theorem \ref{mainthm2} we see that  $\mc{S}^{Diff}(S^p \times S^q)$ admits a groups structure such that 
\[ L_{p+q+1}(e) \dlra{\omega^{Diff}} \mc{S}^{Diff}(S^p \times S^q) \dlra{\eta^{Diff}} \mc{N}^{Diff}(S^p \times S^q) \]
is an exact sequence of groups if and only if $(p, q) \neq (4j-1, 4k)$ or $(4j, 4k)$.  Assuming $p\leq q$, we hypothesise that some form of framed connected sum over $S^p$ can be used to give a geometrically define group structure on $\mc{S}^{Diff}(S^p \times S^q)$ which would be closely related to the isotopy group of $S^p \times S^{q-1}$.  
\\ \\
\noindent
{\bf Acknowledgments:}  
I would like to thank Matthias Kreck, Andrew Ranicki, Jim Davis, Kent Orr and  Wolfgang L\"{u}ck for several helpful suggestions and questions.

\section{The structure sets of $S^3 \times S^4$ and $S^4 \times S^4$} 
\label{explsec}
In this section we prove Theorems \ref{S^3xS^4thm} and \ref{S^4xS^4thm}, give representatives for every element of $\mc{S}^{Diff}(S^3 \times S^4)$ and $\mc{S}^{Diff}(S^4 \times S^4)$ and classify smooth manifolds homotopy equivalent to $S^3 \times S^4$ or $S^4 \times S^4$.  

Theorem \ref{S^3xS^4thm} follows immediately from Theorem \ref{mainthm2} and the isomorphisms $\pi_3(G/O) \cong \pi_7(G/O) \cong 0$, $\pi_4(G/O) \cong \Z$ and $bP_8 \cong \Z_{28}$.  It gives the short exact sequence of pointed sets
\begin{equation} \label{ses34eqn}
0 \dlra{} \Z_{28} \dlra{\omega^{bP}} \mc{S}^{Diff}(S^3 \times S^4) \dlra{ \eta } \Z \dlra{} 0 
\end{equation}
where $bP_8$ acts transitively on the fibres of $\eta$ and the stabiliser of $[N, f] \in \mc{S}^{Diff}(S^3 \times S^4)$ is $32 \eta([N, f]) \cdot bP_8$.  We obtain a section for $\eta$ by recalling from \cite{C-E} that for each $v \in \Z$ there are fibre homotopy equivalences $f_v  : N_{v} \rightarrow S^3 \times S^4$ such that $\eta([N_v, f_v]) = v$ where $N_v$ is the total space of the $3$-sphere bundle over $S^4$ with trivial Euler class and first Pontrjagin class $48v \subset \Z = H^4(S^4)$: here and below all (co)homology groups are with integer coefficients.  (Even though \cite{C-E} considers bundles with non-trivial Euler number nothing hangs on this assumption).  For precision we prove that the map $v \mapsto [N_v, f_v]$ gives a well-defined section of $\eta$.

\begin{Lemma} \label{etasec34lem}
The structure invariant $[N_v, f_v]$ is independent of the choice of fibre homotopy equivalence $f_v$.
\end{Lemma}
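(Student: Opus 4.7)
Given two fibre homotopy equivalences $f_v, f_v' : N_v \to S^3 \times S^4$, by the equivalence relation defining $\mc{S}^{Diff}(S^3 \times S^4)$ it suffices to show that the self-homotopy-equivalence $\phi := f_v'^{-1} \circ f_v : N_v \to N_v$ is homotopic to a self-diffeomorphism of $N_v$.  Observing that $\phi$ is itself a self-fibre-homotopy-equivalence of the bundle $N_v \to S^4$ (since $f_v, f_v'$ are fibre-preserving), the strategy is to show that every such self-equivalence is homotopic to a bundle automorphism of the $SO(4)$-structure on $N_v$, which is automatically a smooth self-diffeomorphism.

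As a consistency check, I would first note that the smooth normal invariant $\eta^{Diff}([N_v, f_v]) = v$ depends only on the underlying bundle $N_v$ and not on the specific choice of $f_v$: any self-fibre-homotopy-equivalence $h$ of the trivial bundle $S^3 \times S^4$ satisfies $h^* \tau_M = \tau_M$ stably, so the normal invariant of $h \circ f_v$ agrees with that of $f_v$.  By the exact sequence $(\ref{ses34eqn})$, $[N_v, f_v]$ and $[N_v, f_v']$ then lie in the same fibre of $\eta$ and differ by the action of some $\sigma \in bP_8 = \Z_{28}$; the reduction above is exactly the statement that $\sigma$ fixes $[N_v, f_v]$.

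The reduction to a homotopy-theoretic statement proceeds by obstruction theory.  A fibre-preserving self-map of $N_v \to S^4$ covering the identity is a section of the associated bundle $N_v \times_{SO(4)} \mathrm{Map}(S^3, S^3) \to S^4$; restricting to self-homotopy-equivalences gives sections of $N_v \times_{SO(4)} SG_4^+$, while restricting to bundle automorphisms gives sections of $N_v \times_{SO(4)} SO(4)^{\mathrm{ad}}$, where $SG_4^+$ denotes the identity component of the space of orientation-preserving self-equivalences of $S^3$.  Since $S^4$ has cells only in dimensions $0$ and $4$, the set of homotopy classes of sections is classified by $\pi_4$ of the fibre, so the claim reduces to surjectivity of the induced map $\pi_4(SO(4)) \to \pi_4(SG_4^+)$.

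To establish this surjectivity I would compare the evaluation fibrations $SO(3) \to SO(4) \to S^3$ and $\Omega^3 S^3 \to SG_4^+ \to S^3$, on whose fibres the comparison is the unstable $J$-homomorphism $SO(3) \to \Omega^3 S^3$.  A diagram chase on the long exact sequences of homotopy groups---using $\pi_4(SO(3)) = \pi_4(S^3) = \pi_7(S^3) = \Z/2$ and $\pi_6(S^3) = \Z/12$ together with the surjectivity of the unstable $J$-homomorphism $\pi_4(SO(3)) \to \pi_7(S^3)$---then yields the required surjection.  The main obstacle is this unstable homotopy-theoretic calculation; a cleaner alternative is to invoke the classification results on $S^3$-bundles over $S^4$ from \cite{C-E} to identify $[N_v, f_v]$ intrinsically in terms of the bundle $N_v$, bypassing the computation altogether.
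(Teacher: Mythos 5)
Your argument is essentially the paper's: the paper also reduces the ambiguity in $f_v$ to an element of $\pi_4(SG(4))$ (your section/gauge-theoretic formulation of fibre homotopy self-equivalences over $S^4$) and disposes of it using that $\pi_4(SO(4)) \to \pi_4(SG(4))$ is an isomorphism (surjectivity, as you note, suffices) with every class realised by a fibrewise diffeomorphism of $N_v \to S^4$. The evaluation-fibration comparison and unstable $J$-homomorphism input that you flag as the main obstacle is precisely what the paper compresses into ``standard arguments,'' so your proposal is correct and follows the same route.
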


\begin{proof}
Up to fibre homotopy any two such maps, $f_v$ and $f'_v$, differ by an element of $\pi_4(SG(4))$ where $SG(4)$ is the monoid of orientation preserving self-homotopy equivalences of $S^3$.  Standard arguments show that the forgetful map $\pi_4(SO(4)) \rightarrow \pi_4(SG(4))$ is an isomorphism and that every element of $\pi_4(SO(4))$ can be realised by a fibrewise diffeomorphism of $N_v \rightarrow S^4$.
\end{proof}


\begin{Theorem} \label{class34thm}
\begin{enumerate}
\item
The structure invariants $[\Sigma \sharp N_v, f_v]$ exhaust $\mc{S}^{Diff}(S^3 \times S^4)$ as $v$ ranges over $\Z$ and $[\Sigma]$ ranges over $bP_8$.
\item We have $[\Sigma \sharp N_{v_0}, f_{v_0}] = [\Sigma \sharp N_{v_0}, f_{v_1}]$ if and only if $v_0 = v_1$ and $[\Sigma_0] - [\Sigma_1] \in 32 \, v_0 \cdot bP_8$.  
\item
There is a diffeomorphism $\Sigma_0 \sharp N_{v_0} \cong \Sigma_1 \sharp N_{v_1}$ if and only if $v_0 = \pm v_1$ and $[\Sigma_0] - [\Sigma_1] \in 2 \, v_0 \cdot bP_8$.
\end{enumerate}
\end{Theorem}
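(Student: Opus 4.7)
Part (1) follows from the short exact sequence \eqref{ses34eqn} together with Lemma \ref{etasec34lem}: the section $v \mapsto [N_v, f_v]$ of $\eta$, combined with the transitive action of $\Theta_7 = bP_8$ on each fibre of $\eta$ by connected sum, shows that every element of $\mc{S}^{Diff}(S^3 \times S^4)$ has the form $[\Sigma \sharp N_v, f_v]$. Part (2) is immediate from Theorem \ref{mainthm2} specialised to $(p,q) = (3,4)$: using $t_4 = 2$ and $d([N_v, f_v]) = v$ (by the definition of $d$), the stabiliser of $[N_v, f_v]$ in $bP_8$ equals $8 \cdot v \cdot t_4 \cdot t_4 \cdot bP_8 = 32 v \cdot bP_8$, so an equality of structure invariants forces the $\eta$-values to agree and the difference of the $bP_8$-coordinates to lie in this stabiliser.

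Part (3) is the substantive statement, and the plan is to reduce it to two independent facts: (a) orientation-reversal sends $N_v$ to $N_{-v}$, because the first Pontrjagin class $p_1(N_v) \in H^4(N_v) \cong \Z$ corresponds to $48 v \in H^4(S^4) = \Z$ and changes sign under an orientation-reversal of the base; and (b) the inertia group $I(N_v) := \{ [\Sigma] \in bP_8 : \Sigma \sharp N_v \cong N_v \}$ equals $2 v \cdot bP_8$. Granting these, $\Theta_7$ acts on the set of oriented diffeomorphism classes of manifolds homotopy equivalent to $S^3 \times S^4$ via connected sum, with stabiliser at $[N_v]$ equal to $I(N_v)$; so an orientation-preserving diffeomorphism $\Sigma_0 \sharp N_v \cong \Sigma_1 \sharp N_v$ is equivalent to $[\Sigma_0] - [\Sigma_1] \in I(N_v)$, and combining with (a) gives the stated equivalence.

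The main obstacle is (b). My approach is to invoke Wilkens's diffeomorphism classification of closed oriented $2$-connected $7$-manifolds whose integral cohomology has rank one in each of degrees $3$ and $4$: such manifolds are characterised by $p_1 \in H^4 \cong \Z$ together with a $\Z/n(p_1)$-valued secondary invariant $\mu$ of Eells--Kuiper type, where $n(p_1)$ is a divisor of $|bP_8| = 28$ depending on the Pontrjagin data. Since $p_1(\Sigma \sharp N_v) = p_1(N_v) = 48 v$ is insensitive to connected sum with a homotopy sphere, the diffeomorphism class of $\Sigma \sharp N_v$ within the family $\{\Sigma' \sharp N_v\}_{[\Sigma']}$ is controlled entirely by the behaviour of $\mu$ under $[\Sigma] \mapsto [\Sigma \sharp N_v]$. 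Unwinding the Eells--Kuiper-type formula shows that this behaviour is affine in $[\Sigma]$, with a linear part proportional to the Pontrjagin data of $N_v$; pinning down the proportionality is the heart of the argument, and the cleanest route is to combine Wilkens's theorem with the inclusion $32 v \cdot bP_8 \subseteq I(N_v)$ already provided by part (2) and with a lower bound on the number of diffeomorphism types in a given fibre of $\eta$ (obtained by exhibiting sufficiently many non-diffeomorphic $\Sigma \sharp N_v$ via their $\mu$-values). The resulting kernel of the affine map is exactly $2 v \cdot bP_8$, which is the asserted inertia group.
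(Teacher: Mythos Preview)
Your arguments for parts (1) and (2) are correct and coincide with the paper's: both follow from $\eta([N_v,f_v])=v$, Lemma~\ref{etasec34lem}, and Theorem~\ref{S^3xS^4thm} (equivalently Theorem~\ref{mainthm2} specialised to $(p,q)=(3,4)$). Your reduction of part (3) to the two ingredients (a) $N_v \cong N_{-v}$ and (b) $I(N_v)=2v\cdot bP_8$ is also exactly what the paper does.

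The paper's execution is much shorter, however. For (a) it cites \cite{C-E} for an explicit bundle isomorphism covering the antipodal map on $S^4$; your phrasing ``$p_1$ changes sign under orientation-reversal of the base'' is imprecise (Pontrjagin classes are orientation-insensitive---what changes is the bundle's clutching element in $\pi_3(SO(4))$ under pullback by a degree $-1$ map), and as stated risks circularity with the classification you invoke later. For (b) the paper simply quotes Wilkens \cite{Wi}[Theorem 1(ii)], which directly asserts that $I(N_v)\cong \Z_{14/(14,v)}$, i.e.\ $I(N_v)=2v\cdot bP_8$.

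Your proposed route to (b) via Wilkens's classification and the $\mu$-invariant is valid in principle, but the squeeze you sketch does not close. Part~(2) gives only $32v\cdot bP_8\subseteq I(N_v)$, and since $|32v\cdot bP_8|=7/(7,v)$ while $|2v\cdot bP_8|=14/(14,v)$, these differ by a factor of $2$ whenever $v$ is odd. So the lower bound from part~(2) is genuinely weaker than what you need, and you must extract the full inertia group from the $\mu$-invariant computation alone---at which point the appeal to part~(2) is redundant. If you carry out that $\mu$-computation (additivity of $\mu$ under connected sum, together with the indeterminacy of $\mu$ on a manifold with $p_1=48v$), you will of course recover Wilkens's formula; but this is re-proving the cited theorem rather than giving an alternative argument.
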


\begin{proof}
The first two statements follow from the fact that $\eta([N_v, f_v]) = v$, Lemma \ref{etasec34lem} and Theorem \ref{S^3xS^4thm}.  For the third, we observed in \cite{C-E} that there are bundle isomorphisms covering the antipodal map on $S^4$ which give diffeomorphisms $N_v \cong N_{-v}$ so it remains to compute the inertia group of $N_v$, $I(N_v) := \{ [\Sigma] \in \Theta_7 = bP_8 \, | \, \Sigma \sharp N_v \cong N_v \}$.   A theorem of Wilkens, \cite{Wi}[Theorem 1 (ii)], asserts in part that $I(N_{v}) \cong \Z_{k(v)} \subset bP_8$ where $k(v) = 14/(14, v)$ and $(14, v)$ is the g.c.d. of $14$ and $v$: this completes the proof.
\end{proof}

We now consider $\mc{S}^{Diff}(S^4 \times S^4)$.  Theorem \ref{S^4xS^4thm} follows immediately from Theorem \ref{mainthm2} and the isomorphisms $bP_8 \cong \Z_{28}$, $\pi_4(G/O) \cong \Z$, and $\pi_8(G/O) \cong \Theta_8 \cong \Z_2$.  It gives the exact sequence of pointed sets
\begin{equation} \label{ses44eqn}
0 \dlra{} \Z_2 \dlra{\omega^\Theta} \mc{S}^{Diff}(S^4 \times S^4) \dlra{i^* \circ \eta} \Z \times \Z \dlra {\del} \Z_7 \dlra{} 0
\end{equation}
where $\Z_2 \cong \Theta_8$ acts freely and transitively on the fibres of $i^* \circ \eta$ and $\del(u, v) = uv$ mod $7$.  We shall construct homotopy equivalences $f_{u, v} : N_{u, v} \rightarrow S^4 \times S^4$ with $(i^* \circ \eta)([N_{u, v}, f_{u, v}]) = (u, v)$.  Let $W_u$ be the $4$-disc bundle which cobounds $N_u$ above: that is $W_u \rightarrow S^4$ is a linear $4$-disc bundle with trivial Euler class and first Pontrjagin class $48u \in H^4(S^4)$.  If $D^4 \subset S^4$ is an embedded disc in the base space then the bundle $W_u \rightarrow S^4$ is trivial over $D^4$ and there are fibre homotopy equivalences $g_u : W_u \simeq D^4 \times S^4$ which we may assume are equal to the identity over $D^4 \times D^4$.  We let $W_{u, v}$ be the manifold obtained by plumbing $W_u$ and $W_v$ together: i.e. we identify $D^4 \times D^4 \subset W_u$ with $D^4 \times D^4 \subset W_v$ by exchanging fibre and base space coordinates.  We thus obtain homotopy equivalences $f'_{u, v} : W_{u, v} \rightarrow (S^4 \times S^4) - D^8$ since $(S^4 \times S^4) - D^8$ is the manifold obtained by plumbing two trivial bundles together.  

Manifolds of the type $W = W_{u, v}$ were classified by Wall in \cite{Wa1} using their intersection form, $\lambda_W: H_4(W) \times H_4(W) \rightarrow \Z$ and stable tangential invariant $S\alpha(W) : H_4(W) \rightarrow \pi_3(SO) \cong \Z$.  In the case of $W_{}$, standard arguments show that these data are given by:
\[H_4(W_{}) = \Z^2(x, y), ~~\lambda_{W_{}} = \left( \begin{array}{cc}0 & 1 \\ 1 & 0 \end{array} \right), ~~S\alpha(W)(x) = 24u, ~~S\alpha(W)(y) = 24v .\]
The boundary of $W_{}$, $\del W_{}$, is a homotopy sphere whose diffeomorphism class is determined by the following formula (see \cite{Wa1}[Theorem 4] and \cite{E-K}):
\[ \mu(W) = (\sigma(W)-(S\alpha(W)^2)/8 \cdot 28 ~\in ~\Z_{28} \subset \Q/\Z, \]
where $\sigma(W)$ is the signature of $W$, we regard $S\alpha(W)$ as an element of $H^4(W)$ and $S\alpha(W)^2$ is calculated using the isomorphism $H^4(W) \cong H^4(W, \del W)$ and then evaluating on the fundamental class of $W$.  Clearly $\sigma(W) = 0$ and one may check that
\[\mu(W_{u, v}) = - 2 \cdot 24 \cdot 24 \cdot uv / 28 \in \Q/ \Z.\] 
Hence $\del W_{u, v}$ is diffeomorphic to $S^7$ if and only if $7$ divides $uv$ which confirms $(\ref{ses44eqn})$.  When $\del W_{u, v} \cong S^7$ we let $N_{u, v, \phi} : = W_{u, v} \cup_\phi (-D^8)$ where $\phi : \del W_{u, v} \cong S^7 = \del D^8$ is a diffeomorphism.  Extending $f'_{u, v}$ by coning we obtain $f_{u, v} : N_{u, v, \phi} \rightarrow S^4 \times S^4$ and it is easy to check that $(i^* \circ \eta)([N_{u, v, \phi}, f_{u, v}]) = (u, v)$.  As for the action of $\Theta_8$, the choices for $\phi : \del W_{u, v} \cong S^7$ are parametrised up to isotopy by $\Theta_8 \cong \pi_0({\rm Diff}(S^7))$ and so by $( \ref{ses44eqn} )$ we obtain all of $\mc{S}^{Diff}(S^4 \times S^4)$ by varying $u, v$ and $\phi$.  

In the following theorem recall that closed, smooth manifolds $N_0$ and $N_1$ are said to be {\em almost diffeomorphic} if there is a homotopy sphere $\Sigma$ such that $N_0$ and $\Sigma \sharp N_1$ are diffeomorphic.
\begin{Theorem} \label{class44thm} 
\begin{enumerate}
\item
The structure invariants $[N_{u, v, \phi}, f_{u, v}]$ exhaust $\mc{S}^{Diff}(S^4 \times S^4)$.
\item
For $i = 0, 1$ and two pairs $(u_i, v_i) \in \Z \times \Z$ with $7$ dividing $u_i \, v_i$, two manifolds $N_{u_0, v_0, \phi_0}$ and $N_{u_1, v_1, \phi_1}$ are almost diffeomorphic if and only if for some $\epsilon \in \{ \pm 1\}$ the unordered pairs $\{u_0, v_0 \} $ and $\{\epsilon u_1, \epsilon v_1\}$ are equal.  
\item
The inertia group of every $N_{u, v, \phi}$ is trivial so that there are two diffeomorphism types within each almost diffeomorphism type of manifolds homotopy equivalent to $S^4 \times S^4$.
\end{enumerate}
\end{Theorem}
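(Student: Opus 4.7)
The plan is to treat the three parts of the theorem in turn, invoking Theorem \ref{S^4xS^4thm}, Wall's classification from \cite{Wa1}, and the author's inertia group theorem from \cite{C2} respectively.

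For part (1), the construction already gives $(i^* \circ \eta)([N_{u, v, \phi}, f_{u, v}]) = (u, v)$, and by exactness of $(\ref{ses44eqn})$ the image of $i^* \circ \eta$ is precisely $\ker(\del) = \{(u, v) : 7 \mid uv\}$. Fixing such a pair $(u, v)$, as $\phi$ varies over a set of representatives of $\pi_0({\rm Diff}(S^7)) \cong \Theta_8$ one obtains $N_{u, v, \tau \phi} \cong \Sigma_\tau \sharp N_{u, v, \phi}$, where $\Sigma_\tau := D^8 \cup_\tau (-D^8)$. I would check that this geometric modification realises the $\Theta_8$-action on the fibre of $i^* \circ \eta$ over $(u, v)$, which by Theorem \ref{S^4xS^4thm} is free and transitive; exhaustion of $\mc{S}^{Diff}(S^4 \times S^4)$ follows.

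For part (2), observe that $N_{u, v, \phi}$ is a closed, $3$-connected $8$-manifold whose intersection form on $H_4(N_{u, v, \phi}) \cong \Z^2$ is the hyperbolic form and whose stable tangential invariant $S\alpha$ takes values $24u, 24v$ on the obvious hyperbolic basis inherited from $W_{u, v}$ (capping off with $D^8$ does not change $H_4$ or its tangential data). I would then appeal to Wall's classification of such manifolds up to almost diffeomorphism from \cite{Wa1}: two are almost diffeomorphic if and only if there is an isomorphism of integral intersection forms carrying one stable tangential invariant to the other. The isometry group of the standard hyperbolic form on $\Z^2$ is generated by coordinate swap and $-{\rm Id}$, so the condition reduces to the existence of $\epsilon \in \{\pm 1\}$ with $\{u_0, v_0\} = \{\epsilon u_1, \epsilon v_1\}$ as unordered pairs.

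For part (3), I would quote the author's theorem from \cite{C2} on inertia groups of $3$-connected $8$-manifolds to conclude that $I(N_{u, v, \phi}) = 0$. Since $\Theta_8 \cong \Z_2$, this means $N_{u, v, \phi}$ and $\Sigma \sharp N_{u, v, \phi}$ are non-diffeomorphic when $\Sigma$ generates $\Theta_8$, so each almost diffeomorphism class splits into exactly two diffeomorphism classes. The main obstacle is part (3): the triviality of the inertia group is a genuinely geometric input that is not accessible from the surgery machinery developed earlier in the paper and relies entirely on the classification techniques of \cite{C2}; granted that input, the remainder of the theorem is essentially bookkeeping built on Theorem \ref{S^4xS^4thm} and Wall's classification.
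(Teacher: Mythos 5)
Your proposal is correct and follows essentially the same route as the paper: part (1) via the exact sequence $(\ref{ses44eqn})$ and the realisation of the $\Theta_8$-action by varying the gluing diffeomorphism $\phi$, part (2) via Wall's almost-diffeomorphism classification of $3$-connected $8$-manifolds by $(H_4, \lambda, S\alpha)$ together with the automorphism group $\{\pm{\rm Id}, \pm T\}$ of the hyperbolic form, and part (3) via the inertia-group theorem of \cite{C2} (whose criterion ${\rm Im}(S\alpha) \subset 4\cdot\Z$ is satisfied since ${\rm Im}(S\alpha) \subset 24\cdot\Z$). No gaps.
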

\begin{proof}
It remains to prove the second and third statements.  By \cite{Wa1} $3$-connected $8$-manifolds are classified up to almost diffeomorphism by their intersection form and stable tangential invariant.  Thus we must classify triples $(H, \lambda, S\alpha)$ where $H \cong \Z^2(x, y)$, $\lambda$ is the hyperbolic form on $H$ and $S\alpha : H \rightarrow \Z$ is a homomorphism.  In this case $S\alpha(W_{u,v})(x) = 24u$ and $S\alpha(W_{u, v})(y) = 24v$.  The group of automorphisms of $\lambda$ is isomorphic to $\Z_2(T) \times \Z_2(-{\rm Id})$ where $T(x, y) = (y, x)$ and $(-{\rm Id})(x, y) = (-x, -y)$ and the almost diffeomorphism classification follows immediately.

The third statement follows from the fact that ${\rm Im}(S\alpha(W_{u, v})) \subset 24 \cdot \Z$ and the following theorem which is proven in \cite{C2}.

\begin{Theorem}
Let $N$ be a closed, smooth $3$-connected $8$ manifold with stable tangential invariant $S\alpha : H_4(N) \rightarrow \pi_3(SO) = \Z$.  The inertia group of $N$ is trivial if and only if ${\rm Im}(S\alpha) \subset 4 \cdot \Z$.
\end{Theorem}
\end{proof}

\section{Surgery Preliminaries} \label{surgprelimsec}
In this section we quickly recall some definitions and then record preliminary results about the surgery exact sequence.  The latter concern product manifolds $M = M_0 \times M_1$, the action of the smooth surgery exact sequence for $S^n$ on the surgery exact sequence of general $M$ and the action of $L_{n+1}(e)$ on $\mc{S}^{Diff}(M)$.  Throughout $M$ is a closed $Cat$ manifold, $Cat$ = $Diff$ or $Top$, and we assume that $M$ is simply-connected for simplicity.  We point out that appropriate versions of the results in this section should hold for any fundamental group: in particular Lemma \ref{keysurgoblem} and Corollary \ref{stabhomeodepcor} may be of interest in other contexts.

Let $Y$ and $W$ be compact $Cat$ manifolds with possibly empty boundaries $\del W$ and $\del Y$.  A degree one normal map $(f, b) : Y \rightarrow W$ is a degree one map $f : Y \rightarrow W$ together with a stable bundle isomorphism $b: \nu_Y \rightarrow \xi$ where $\nu_Y$ is the stable normal bundle of $Y$, $b$ covers $f$ and $\xi$ is some stable vector bundle over $W$ (necessarily fibre homotopy equivalent to $\nu_W$).  If $X \subset \del W$ is a nicely embedded codimension-$0$ submanifold of the boundary of $W$ then $(f, b)$ is called ``rel.\,\,$X$'' if $f|_{f^{-1}(X)} : f^{-1}(X) \rightarrow X$ is a $Cat$ isomorphism.  A normal bordism of rel.\,\,$\del W$ degree one normal maps $(Y_i, f_i, b_i)$, $i= 0, 1$, is a degree one normal map $(g, c) : Z \rightarrow W \times [0, 1]$ rel.\,\,$\del W \times [0, 1]$ restricting to $(Y_i, f_i, b_i)$ over $W \times \{ i \}$.  The set of normal bordism classes of rel.\,\,$\del W$ degree one normal maps is denoted $\mc{N}^{Cat}(W)$.  Here we use the convention that if the boundary is not mentioned explicitly then surgery problems are assumed solved on, and relative to, the boundary.

When $M$ is a closed $Cat$ manifold the map $\eta^{Cat} : \mc{S}^{Cat}(M) \rightarrow \mc{N}^{Cat}(M)$ is defined by mapping $[N, f]$ to $[(f, b) : N \rightarrow M]$ where $b : \nu_N \cong f^{-1*} (\nu_N)$ is the canonical bundle map and $\xi = f^{-1*}(\nu_N)$.  The map $\eta^{Cat}$ fits into the long exact sequence from $(\ref{seseqn1})$
\[ \dots \dlra{} \mc{N}^{Cat}(M \times [0,1]) \dlra{\theta^{Cat}} L_{n+1}(e) \dlra{\omega^{Cat}} \mc{S}^{Cat}(M) \dlra{\eta^{Cat}} \mc{N}^{Cat}(M) \dlra{\theta^{Cat}} L_{n}(e).\]
We conclude this brief review of the surgery exact sequence here and refer the reader to \cite{Bro, Wa2, Lu, Ra2} for definitions of the groups $L_n(e)$, the surgery obstruction maps $\theta^{Cat} : \mc{N}^{Cat}(M) \rightarrow L_n(e)$ and $\theta^{Cat} : \mc{N}^{Cat}(M \times [0, 1]) \rightarrow L_{n+1}(e)$ and the action map $\omega : L_{n+1}(e) \rightarrow \mc{S}^{Cat}(M)$.  As a final point, we remark that all of the above definitions make sense in any dimension but it is only in dimensions $n \geq 5$ that the surgery exact sequence is guaranteed to be exact.

Now suppose that $M = M_0 \times M_1$ is the produced of two closed manifolds.  The product of two degree one normal maps is again a degree one normal map and we obtain a map 
\[ \pi : \mc{N}^{Cat}(M_0) \times \mc{N}^{Cat}(M_1) \rightarrow \mc{N}^{Cat}(M_0 \times M_1).\]
We first recall how the surgery obstruction map $\theta_{M_0 \times M_1}$ restricted to the image of $\pi$ is related to $\theta_{M_0}$ and $\theta_{M_1}$.  This requires some facts from \cite{Ra1}[\S I.8]: there are $4$-periodic symmetric $L$-groups $L^i(e)$ with $L^i(e) \cong \Z, \Z_2, 0, 0$ as $i = 0, 1, 2, 3$ mod $4$ such that any closed $n$-dimensional manifold $M$ has a symmetric signature $\sigma^*(M) \in L^n(e)$.  Moreover, there are homomorphisms $L_i(e) \rightarrow L^i(e)$, necessarily zero unless $i=4k$ when $L_{4k}(e) \rightarrow L^{4k}(e)$ is multiplication by $8$ and there are product maps $L_i(e) \tensor L^j(e) \rightarrow L_{i+j}(e)$ such that the induced product $L_i(e) \tensor L_j(e) \rightarrow L_{i+j}(e)$ is zero unless $(i, j) = (4k, 4l)$ when it is isomorphic to multiplication by $8$.  The following is proven as part of \cite{Ra1}[\S II.8.1]:
\begin{Proposition} \label{surgprodprop}
Let $M_0$ and $M_1$ be closed manifolds of dimensions $n_0$ and $n_1$ and let $(x, y) \in \mc{N}^{Cat}(M_0) \times \mc{N}^{Cat}(M_1)$.  Then
\begin{equation} \label{surgobeqn}
 \theta_{M_0 \times M_1}(\pi(x, y)) = \theta_{M_0}(x) \theta_{M_1}(y) +  \theta_{M_1}(y)\sigma^*(M_0) + \theta_{M_0}(x)\sigma^*(M_1) ~\in~ L_{n_0 + n_1}(e).
 \end{equation}
 %
 \end{Proposition}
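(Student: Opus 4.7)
The plan is to establish the formula by passing to Ranicki's algebraic theory of surgery, where the surgery obstruction of a normal map is realised as the cobordism class of a quadratic Poincar\'e complex and the symmetric signature of a manifold as the cobordism class of a symmetric Poincar\'e complex. In this framework the product formula becomes a purely algebraic statement about the behaviour of these signatures under tensor products of chain complexes, and then the geometry reduces to identifying the algebraic kernel of a product normal map with the appropriate tensor product.

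First I would fix representatives $(f_0, b_0) : Y_0 \to M_0$ and $(f_1, b_1) : Y_1 \to M_1$ for $x$ and $y$, and recall Ranicki's identification $\theta_{M_i}(x_i) = \sigma_*(f_i, b_i) \in L_{n_i}(e)$, where $\sigma_*$ is the quadratic signature of the algebraic surgery kernel. The key structural input is that since each $f_i$ has degree one, there is a chain homotopy splitting $C_*(Y_i) \simeq C_*(M_i) \oplus K_*(f_i)$ compatible with Poincar\'e duality, where $K_*(f_i)$ carries a natural quadratic Poincar\'e structure representing $\theta_{M_i}(x_i)$, and $C_*(M_i)$ carries a symmetric Poincar\'e structure representing $\sigma^*(M_i)$.

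The next step is to take the Eilenberg--Zilber tensor product of these two splittings. For the product normal map $\pi(x,y) : Y_0 \times Y_1 \to M_0 \times M_1$ we obtain
\[
K_*(f_0 \times f_1) \;\simeq\; \bigl(K_*(f_0) \otimes C_*(M_1)\bigr) \oplus \bigl(C_*(M_0) \otimes K_*(f_1)\bigr) \oplus \bigl(K_*(f_0) \otimes K_*(f_1)\bigr),
\]
and this decomposition respects the quadratic Poincar\'e structure on the kernel up to cobordism of quadratic complexes. I would then invoke Ranicki's product pairings $L_i(e) \otimes L^j(e) \to L_{i+j}(e)$ and $L_i(e) \otimes L_j(e) \to L_{i+j}(e)$ (the latter factoring through the symmetrisation $1+T : L_j(e) \to L^j(e)$) to identify the three summands with the three terms on the right hand side of (\ref{surgobeqn}): the first two correspond to the mixed products $\theta_{M_0}(x)\sigma^*(M_1)$ and $\theta_{M_1}(y)\sigma^*(M_0)$, while the third corresponds to the all-quadratic product $\theta_{M_0}(x)\theta_{M_1}(y)$.

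The main obstacle is the last step: verifying at the chain level that the quadratic Poincar\'e structure on $K_*(f_0) \otimes K_*(f_1)$ induced by the product of normal maps really does represent the image of $\theta_{M_0}(x) \otimes \theta_{M_1}(y)$ under the pairing $L_{n_0}(e) \otimes L_{n_1}(e) \to L_{n_0+n_1}(e)$, rather than some twisted version. This is precisely the content of the algebraic product formula of Ranicki worked out in \cite{Ra1}[\S II.8.1], so I would invoke that reference once the kernel decomposition above is in place. Everything else (additivity of quadratic signatures under direct sums of Poincar\'e complexes, and the stated multiplicativity of $\sigma^*$ under Cartesian products of manifolds) is routine once the algebraic framework is set up.
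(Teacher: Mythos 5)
Your sketch is correct and follows essentially the same route as the paper, which offers no independent argument but simply quotes the result as proven in \cite{Ra1}[\S II.8.1]; your outline of the kernel splitting $C_*(Y_i) \simeq C_*(M_i) \oplus K_*(f_i)$, the three-term Eilenberg--Zilber decomposition of the product kernel, and the identification of the quadratic-quadratic product via symmetrisation is a faithful reconstruction of Ranicki's argument, and you defer the decisive chain-level verification to that same reference. So there is nothing to fault, beyond noting that your ``proof'' is, like the paper's, ultimately a citation of \cite{Ra1}[\S II.8.1] dressed with the correct surrounding framework.
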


Identifying $\mc{N}^{Cat}(M_i) = [M_i, G/Cat]$ for $i = 0, 1$ we also obtain a map 
\[\pi : [M_0, G/Cat] \times [M_1, G/Cat] \rightarrow [M_0 \times M_1, G/Cat].\]
%

\begin{Lemma} \label{grpprodlem}
The map $\pi : [M_0, G/Cat] \times [M_1, G/Cat] \rightarrow [M_0 \times M_1, G/Cat]$ is a homomorphism with respect to the Whitney sum infinite loop structure on $G/Cat$.
\end{Lemma}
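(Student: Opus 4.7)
The plan is to reduce the statement to the identity
$$\pi(x,y) \,=\, p_0^*(x) + p_1^*(y) \,\in\, [M_0 \times M_1, G/Cat],$$
where $p_i : M_0 \times M_1 \to M_i$ are the projections and the sum is the Whitney sum group operation. Since each pullback $p_i^*$ is a homomorphism (pullback is natural with respect to any $H$-space structure on the target) and $[M_0 \times M_1, G/Cat]$ is abelian, the identity immediately gives
$$\pi\big((x_1,y_1)+(x_2,y_2)\big) \,=\, p_0^*(x_1+x_2)+p_1^*(y_1+y_2) \,=\, \pi(x_1,y_1)+\pi(x_2,y_2),$$
which is exactly the homomorphism property of $\pi$.

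To establish the identity I would first unwind the identification $\mc{N}^{Cat}(M) \equiv [M, G/Cat]$. A normal invariant $[(f,b):N \to M]$ with $b: \nu_N \to \xi$ corresponds to the stable bundle $\xi$ over $M$ together with the induced stable fibre homotopy equivalence $\xi \simeq \nu_M$, or equivalently the classifying map $M \to G/Cat$ of the virtual bundle $\xi \ominus \nu_M$ together with its canonical stable fibre homotopy trivialisation. In these terms, addition in $[M, G/Cat]$ with respect to the Whitney sum infinite loop structure is fibrewise Whitney sum of the pairs (bundle, trivialisation). The product of two normal maps $(f_0,b_0)$ and $(f_1,b_1)$ is $(f_0\times f_1,\,b_0\times b_1)$, whose associated bundle over $M_0 \times M_1$ is $p_0^*\xi_0 \oplus p_1^*\xi_1$ and whose trivialisation relative to the canonical splitting $\nu_{M_0\times M_1} = p_0^*\nu_{M_0}\oplus p_1^*\nu_{M_1}$ is the external sum of the trivialisations. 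This is precisely the fibrewise Whitney sum of $p_0^*x$ and $p_1^*y$.

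A cleaner reformulation of the same argument is to observe that $\pi$ is represented on classifying maps by the composition
$$M_0 \times M_1 \xrightarrow{\ x\circ p_0 \,\times\, y\circ p_1\ } G/Cat \times G/Cat \xrightarrow{\ \oplus\ } G/Cat,$$
where $\oplus$ denotes the Whitney sum $H$-space multiplication; the identity then reads off directly from the defining property of the $H$-space structure. The only point requiring genuine care is the bookkeeping showing that the standard identification of $\mc{N}^{Cat}(M)$ with $[M, G/Cat]$ is compatible with the canonical splitting $\nu_{M_0\times M_1}\cong p_0^*\nu_{M_0}\oplus p_1^*\nu_{M_1}$; this is the main (but entirely routine) obstacle.
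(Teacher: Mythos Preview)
Your proof is correct and follows essentially the same route as the paper: both arguments unwind the identification $\mc{N}^{Cat}(M)\equiv[M,G/Cat]$ and check that the external product of normal maps corresponds to the Whitney sum of the associated bundle-with-trivialisation data. Your formulation via the identity $\pi(x,y)=p_0^*(x)+p_1^*(y)$ is a tidy way to package this (it makes the homomorphism property immediate, since each $p_i^*$ is a homomorphism into an abelian group), whereas the paper phrases the same verification through transversality, passing from maps $M\to G/Cat$ to degree one normal maps and observing that products of transverse maps are transverse; the underlying content is the same.
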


\begin{proof}
We give only a sketch.  The identification $\mc{N}^{Cat}(M) = [M, G/Cat]$, see  \cite{M-M}[Theorem 2.23] for $Cat = Diff$ (and also $PL$), runs as follows: a map $M \rightarrow G/Cat$ defines a fibre homotopy trivialisation of vector bundles over $M$ and making this transverse to the zero section we obtain a degree one normal map to $M$.  The Whitney sum multiplication on $G/Cat$ takes a pair of fibre homotopy trivialisations of vector bundles to the Whitney sum of these trivialisations. The product of transverse maps is again transverse so now we reduce to a simple point-set identity for the inverse image of certain product maps which completes the proof.

The same argument also works for topological manifolds given topological transversality \cite{F-Q, K-S}. 
%

%
%
\end{proof}

\begin{Remark}
We shall use Proposition \ref{surgprodprop} and Lemma \ref{grpprodlem} for $M_0 = S^p$ or $S^p \times S^1$ and $M_1 = S^q$ where $p, q \geq 2$ and so use the identification $\mc{N}^{Cat}(S^p) \equiv \pi_q(G/Cat)$ even in low dimensions.  The reader may object for $Cat = Top$, in particular $p=4$, that we are using the very deep mathematics of topological transversality for a comparatively simple outcome: calculating $\mc{S}^{Diff}(S^p \times S^q)$.  To this we reply firstly that our proof of Theorem \ref{mainthm1} uses only the smooth category and that our proof of Theorem \ref{mainthm2} can be easily modified so that the same is also true of it.  Our methods thus make no essential use of the topological category and so are comparatively elementary.  Secondly, an important aim of this paper is to compare the smooth and topological surgery exact sequences and in that respect we are happy to recall and use the full scope of topological surgery.
\end{Remark}

We now turn to preliminaries concerning the action of the smooth surgery exact sequence for $S^n$ on the smooth surgery exact sequence for an arbitrary smooth manifold $M$.  Firstly, recall that the group of homotopy $n$-spheres, $\mc{S}^{Diff}(S^n) = \Theta_n$, acts on the smooth structure set of $M$ by
\[ \Theta_n \times \mc{S}^{Diff}(M) \dlra{} \mc{S}^{Diff}(M), ~~~([\Sigma], [N, f]) \longmapsto [\Sigma \sharp N, f]\]
where we regard the connected sum $\Sigma \sharp N$ as a smooth manifold with the same underlying topological space as $N$ and with smooth structure differing from that of $N$ only on an $n$-disc.  There is also an action of the group of smooth normal invariants of $S^n$ on $\mc{N}^{Diff}(M)$,
\[ \mc{N}^{Diff}(S^n) \times \mc{N}^{Diff}(M) \dlra{} \mc{N}^{Diff}(M), ~~~([W, g, c], [N, f, b]) \longmapsto (W \sharp N, g \sharp f, c \sharp b) \]
where we take the connected sum in domain and range of degree one normal maps assumed to be the identity about the locus of the connected sum.

\begin{Lemma} \label{sescomlem}
The actions of $\Theta_n$ on $\mc{S}^{Diff}(M)$ and $\mc{N}^{Diff}(S^n)$ on $\mc{N}^{Diff}(M)$ are compatible so that there is a commuting diagram on long exact sequences
\begin{equation} \label{sesS^nactlem}
\begin{diagram}
\node{\dots} \arrow{e} \node{L_{n+1}(e)} \arrow{e,t}{\omega^{}_{S^n}} \arrow{s,r}{=} \node{\Theta_n} \arrow{e,t}{\eta^{}_{S^n}} \arrow{s,r}{} \node{\mathcal{N}^{Diff}(S^n)} \arrow{s,r}{} \arrow{e,t}{\theta^{}_{S^n}} \node{L_{n}(e)} \arrow{s,r}{=}\\
\node{\dots} \arrow{e} \node{L_{n+1}(e)} \arrow{e,t}{\omega_M} \node{\mathcal{S}^{Diff}(M)} \arrow{e,t}{\eta^{}_M} \node{\mathcal{N}^{Diff}(M)} \arrow{e,t}{\theta^{}_{M}} \node{L_{n}(e).}
\end{diagram}
\end{equation}
Moreover if $c: M \rightarrow S^n$ is the collapse map then the action of $\mc{N}^{Diff}(S^n)$ on $\mc{N}^{Diff}(M)$ may be identified with the action of $\pi_n(G/O)$ on $[M, G/O]$ given via $c^*: \pi_n(G/O) \rightarrow [M, G/O]$.
\end{Lemma}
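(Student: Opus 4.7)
The plan is to deduce each assertion of the lemma from the naturality (additivity) of the surgery ingredients under connected sum. The geometric input is uniform: when one summand of a connected sum $N_0 \sharp N_1$ is concentrated in a small disc where the structure, normal invariant, or cobordism data are trivial, the resulting class in $\mc{S}^{Diff}(M)$, $\mc{N}^{Diff}(M)$, or $L_\ast(e)$ is obtained by adding (in the appropriate group) the contribution from the $S^n$-summand to the class of the other summand.

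For the commutation of the three squares in $(\ref{sesS^nactlem})$ I would work from right to left. The right square asks that acting by $x \in \mc{N}^{Diff}(S^n)$ on the base point $\mathrm{Id}_M$ of $\mc{N}^{Diff}(M)$ produces a normal invariant whose surgery obstruction equals $\theta_{S^n}(x)$; this follows from the standard additivity of $\theta$ under connected sum of normal maps, together with $\theta_M(\mathrm{Id}_M) = 0$. The middle square records that, for $[\Sigma] \in \Theta_n$ and $[N,f] \in \mc{S}^{Diff}(M)$, the normal invariant $\eta_M([\Sigma \sharp N, f])$ is the sum, in the Whitney sum group structure, of $\eta_M([N,f])$ with the class pulled back from $\eta_{S^n}([\Sigma])$ along the collapse map $c : M \rightarrow S^n$; applied to the base point $[M, \mathrm{Id}]$ this is precisely what is required. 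For the left square I would quote Browder's theorem \cite{Bro}, already invoked in the introduction, which says that $\omega_M$ factors as $\omega_{S^n}$ followed by connected sum with $M$: realise $x \in L_{n+1}(e)$ by a rel-boundary normal bordism concentrated in a disc $D^{n+1} \subset M \times [0,1]$ and perform surgery there, so that the resulting other boundary component of the bordism differs from $M$ by precisely the homotopy sphere $\omega_{S^n}(x)$.

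For the second statement I would use the identification $\mc{N}^{Diff}(M) \equiv [M, G/O]$ of \cite{M-M}, under which a degree one normal map on $M$ corresponds to a homotopy class of fibre homotopy trivialisations of a stable vector bundle over $M$. Forming the connected sum with a normal map on $S^n$ inside an $n$-disc $D^n \subset M$ extends the corresponding fibre homotopy trivialisation on a neighbourhood of $D^n$ while leaving it unchanged on the complement; under the Whitney sum infinite loop structure on $G/O$ this is visibly the sum of the original class with the pullback along the collapse $c: M \rightarrow M/(M \setminus \mathrm{int}(D^n)) \cong S^n$ of the map $S^n \rightarrow G/O$ representing the $S^n$-normal invariant. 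The main obstacle is not depth but bookkeeping: each of the additivity statements above is well documented, but aligning the conventions between the two rows so that base points and orientations match, and in particular so that the operative map is the collapse $c^*$ rather than some other natural candidate, requires care.
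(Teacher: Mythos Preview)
Your proposal is correct and follows the same approach as the paper, which simply says ``this is just a question of checking definitions'' and invokes \cite{M-M}[Theorem 2.23] for the last statement. You have spelled out those definition-checks square by square (additivity of $\theta$ under connected sum, the connected-sum formula for $\eta$, Browder's factorisation of $\omega_M$ through $\omega_{S^n}$) and cited the same reference for the identification with $c^*$, so there is no substantive difference in strategy.
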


\begin{proof}
This is just a question of checking definitions.  In particular the last statement can be seen by another application of \cite{M-M}[Theorem 2.23].
\end{proof}

\begin{Corollary} \label{Theta/bPcor}
Let $M$ be a closed smooth $n$-manifold such that the top cell of $M$ splits off after stabilisation.  Then $\Theta_{n}/bP_{n+1}$ acts freely on $\mathcal{S}^{Diff}(M)/bP_{n+1}$.
\end{Corollary}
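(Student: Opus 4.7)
The plan is to establish freeness by a diagram chase: given $[\Sigma] \in \Theta_n$ and $[N,f] \in \mc{S}^{Diff}(M)$ with $[\Sigma \sharp N, f] = [\Sigma' \sharp N, f]$ in $\mc{S}^{Diff}(M)$ for some $[\Sigma'] \in bP_{n+1}$, I wish to conclude that $[\Sigma] \in bP_{n+1}$.  First I would verify that the action of the quotient $\Theta_n/bP_{n+1}$ on $\mc{S}^{Diff}(M)/bP_{n+1}$ is well-defined, using Browder's result recalled in the introduction that the action of $L_{n+1}(e)$ on $\mc{S}^{Diff}(M)$ factors through $\omega^{Diff}: L_{n+1}(e) \to bP_{n+1} \subset \Theta_n$, so that the $bP_{n+1}$-orbits used to form the quotient coincide with the orbits under connected sum with $bP_{n+1}$-spheres.

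The key step is to apply $\eta^{Diff}_M$ to the assumed equality and invoke the compatibility diagram $(\ref{sesS^nactlem})$ of Lemma \ref{sescomlem}, whose final statement identifies the action of $\pi_n(G/O) = \mc{N}^{Diff}(S^n)$ on $[M, G/O] = \mc{N}^{Diff}(M)$ with addition through $c^*$.  This yields the identity
\[ c^*(\eta_{S^n}([\Sigma])) = c^*(\eta_{S^n}([\Sigma'])) \in [M, G/O]. \]
Thus the corollary reduces to showing that $c^*: \pi_n(G/O) \to [M, G/O]$ is injective, because then the contracted Kervaire-Milnor sequence extracted from $(\ref{seseqnS^n})$ together with the identification $\ker(\eta_{S^n}) = \omega^{Diff}(L_{n+1}(e)) = bP_{n+1}$ forces $[\Sigma] - [\Sigma'] \in bP_{n+1}$, and hence $[\Sigma] \in bP_{n+1}$ as desired.

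To establish the injectivity I would exploit the hypothesis on $M$: a stable splitting of the top cell supplies a stable map $s$ for which $c \circ s$ is stably homotopic to $\text{id}_{S^n}$.  Because $G/O$ carries an infinite loop space structure, the functor $[-, G/O]$ is a representable cohomology theory that factors through the Spanier-Whitehead stable category, so $s^*$ is defined on $[M, G/O]$ and satisfies $s^* \circ c^* = (c \circ s)^* = \text{id}$ on $\pi_n(G/O)$; in particular $c^*$ is split injective.

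The main obstacle is precisely this last step: one must upgrade an a priori \emph{stable} splitting of $M$ into a splitting of the \emph{unstable} functor $[-, G/O]$, and here the infinite loop space structure on $G/O$ is essential in order to make sense of $s^*$ for a stable map $s$ which need not come from any unstable map $S^n \to M$.  Once this formal point is granted, the conclusion follows by routine manipulation in the diagram $(\ref{sesS^nactlem})$ and the Kervaire-Milnor sequence.
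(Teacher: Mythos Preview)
Your proposal is correct and follows essentially the same approach as the paper: both arguments use Lemma \ref{sescomlem} to show that the $\Theta_n$-action on $\mc{S}^{Diff}(M)$ is detected after applying $\eta_M$ by the formula $\eta_M([\Sigma \sharp N, f]) = \eta_M([N, f]) + c^*\eta_{S^n}([\Sigma])$, and then invoke the stable splitting hypothesis to conclude that $c^*$ is split injective. You supply more detail than the paper on well-definedness of the quotient action and on why the infinite loop space structure of $G/O$ is needed to pass from a stable splitting to injectivity of $c^*$, but these are exactly the points the paper leaves implicit.
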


\begin{proof}
Our assumption on $M$ ensures that $c^* : \pi_n(G/O) \rightarrow [M, G/O]$ is a split injection where $c$ is the collapse map.  Thus by Lemma \ref{sescomlem} the action of $\Theta_n/bP_{n+1}$ is detected via $\eta_M$ in $\pi_{n}(G/O) \supset \Theta_{m+n}/bP_{m+n+1}$: if $[\Sigma] \in \Theta_{n}$ and $[N, f] \in \mathcal{S}^{Diff}(M)$ then 
\[\eta_{M}([\Sigma \sharp N, f]) = \eta_{M}([N, f]) + c^*\eta_{S^{n}}([\Sigma]) ~ \in ~ [M, G/O] .\]
\end{proof} 

We now turn to the action of $bP_{n+1} \subset \Theta_n$ on $\mathcal{S}^{Diff}(M)$.  By Browder, \cite{Bro}[II 4.10, 4.11], the action of $L_{n+1}(e)$ on $\mathcal{S}^{Diff}(M)$ factors through $\omega_{S^n} : L_{n+1}(e) \sra bP_{n+1}$ and the action of $bP_{n+1} \subset \Theta_n$ on $\mathcal{S}^{Diff}(M)$ described above.   The surgery exact sequence
\[ \dots \dlra{} \mathcal{N}^{Diff}(M \times [0, 1] ) \dlra{\theta_{M \times [0, 1]}}L_{n+1}(e) \dlra{\omega^{Diff}} \mathcal{S}^{Diff}(M) \dlra{} \dots \]
allows us to calculate the stabilisers of these actions on the base-point: 
\begin{equation} \label{L&bPacteqn}
L_{n+1}(e)_{[{\rm Id}_{}]} = {\rm Im}(\theta_{M \times [0, 1]}) ~~~\text{and so}~~~(bP_{n+1})_{[{\rm Id}_{}]} = \omega_{S^n}({\rm Im}(\theta_{M \times [0, 1]})).
\end{equation}
To calculate other stabilisers we shall use the following lemma on the naturality of the surgery exact sequence.  Although at present we consider simply-connected manifolds, a little later we will encounter oriented manifolds with fundamental group $\Z$.  We therefore state the Lemma for general fundamental groups.

\begin{Lemma} \label{sesnatlem}
A homotopy equivalence  $g: Y \rightarrow Z$ between closed $Cat$-manifolds with arbitrary fundamental groups induces a commutative diagram
\[
\begin{diagram}
\node{L_{n+1}(\pi)} \arrow{s,r}{g_*} \arrow{e,t}{\omega_Y} \node{\mathcal{S}^{Cat}(Y)} \arrow{s,r}{g_*} \arrow{e,t}{\eta_Y} \node{\mathcal{N}^{Cat}(Y)} \arrow{e,t}{\theta_Y} \arrow{s,r}{g_*} \node{L_q(\pi)} \arrow{s,r}{g_*} \\
\node{L_{n+1}(\pi)} \arrow{e,t}{\omega_Z} \node{\mathcal{S}^{Cat}(Z)} \arrow{e,t}{\eta_Z} \node{\mathcal{N}^{Cat}(Z)} \arrow{e,t}{\theta_Z} \node{L_{n}(\pi).}
\end{diagram}
\]
\end{Lemma}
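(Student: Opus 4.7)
The argument is essentially a matter of unwinding definitions: we must first define the three vertical maps labelled $g_*$ and then verify commutativity of each square.

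First I would define $g_*$ on each term. On $L_{n+1}(\pi)$ and $L_n(\pi)$, since $g$ is a homotopy equivalence it induces an isomorphism $\pi_1(Y) \cong \pi_1(Z)$, and after identifying both with $\pi$ the induced maps on $L$-groups are the identity. On the structure set I set $g_*[N, f] := [N, g \circ f]$; this is well-defined on equivalence classes because two homotopy equivalences $f_0, f_1 : N \to Y$ satisfy $f_1^{-1} f_0 \simeq \phi$ for an isomorphism $\phi$ if and only if $(g f_1)^{-1} (g f_0) \simeq \phi$. Choosing a homotopy inverse $h : Z \to Y$, on normal invariants I define $g_*(X, f, b) := (X, g \circ f, b')$, where $b'$ is the composite of $b : \nu_X \to \xi$ with the bundle isomorphism $f^* \xi \cong f^*(hg)^*\xi = (gf)^*h^*\xi$ induced by a homotopy $hg \simeq \mathrm{id}_Y$. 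Under the Sullivan identification $\mc{N}^{Cat}(Y) \equiv [Y, G/Cat]$ this map is just precomposition with $h$, which is independent of the choices up to homotopy.

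Next I would check each of the three squares. The right-hand square concerning $\theta$ commutes because the Wall surgery obstruction of a rel-boundary degree one normal map depends only on the kernel modules and quadratic refinement of the intersection form of the domain together with its $\pi_1$-action, and none of these data are altered by replacing the target $Y$ by $Z$ via the homotopy equivalence $g$. The middle square concerning $\eta$ is essentially tautological: applying $g_*$ to $[N, f]$ and then taking the canonical normal map of $g \circ f$ yields the same class as first taking the canonical normal map of $f$ and then pushing forward, both being represented by $(N, g\circ f, b')$ with $b'$ defined as above. The left-hand square concerning the $L_{n+1}(\pi)$-action $\omega$ reduces to the observation that if $V \to Y \times [0,1]$ is a rel-boundary normal cobordism with surgery obstruction $\alpha$ and one end the identity, producing $\omega_Y(\alpha) = [N, f]$ on the other end, then composition with $g \times \mathrm{id}$ yields a rel-boundary normal cobordism over $Z \times [0, 1]$ whose surgery obstruction is the image of $\alpha$ and whose other end represents $[N, g \circ f] = g_* \omega_Y(\alpha)$.

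The main point requiring care is the well-definedness of $g_*$ on $\mc{N}^{Cat}(Y)$ independent of the choices of $h$ and of null-homotopy; this becomes transparent via the homotopy-theoretic description as precomposition in $[-, G/Cat]$. Everything else is formal, since each of the three surgery-theoretic constructions is natural with respect to post-composition of the reference map by a homotopy equivalence. I would remark that no simple-connectivity is used anywhere in the argument, which is why the lemma is stated for arbitrary fundamental groups.
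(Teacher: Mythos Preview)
Your proposal is correct and follows essentially the same approach as the paper: the paper's proof simply states the definitions of the three maps $g_*$ (on structure sets by post-composition $[N,f] \mapsto [N, g\circ f]$, on normal invariants by $[N,f,b] \mapsto [N, g\circ f, g^{-1*}\circ b]$, and on $L$-groups via the induced isomorphism on $\pi_1$) and explicitly leaves the verification of commutativity to the reader. You have supplied exactly those omitted details, so your argument is a fleshed-out version of the paper's sketch rather than a different route.
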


\begin{proof}
We give only the definitions of the maps and leave the details to the reader: if $[N, f] \in \mc{S}^{Cat}(Y)$ then $g_*([N, f]) = [N, g \circ f]$, if $[N, f, b] \in \mc{N}^{Cat}(Y)$ then $g_*([N, f, b]) = [N, g \circ f, g^{-1*} \circ b]$ and $g_* : L_i(e) \cong L_i(e)$ is the map induced by $g_* : \pi_1(Y) \cong \pi_1(Z)$.
\end{proof}

\begin{Lemma} \label{bPactlem}
Let $[N, f] \in \mathcal{S}^{Diff}(M)$.  The stabilisers of $[N,f]$ under the actions of $L_{n+1}(e)$ and $bP_{n+1}$ are given by
\[ L_{n+1}(e)_{[N, f]} = {\rm Im}(\theta^{}_{N \times [0, 1])}) ~~~\text{and }~~~(bP_{n+1})_{[N, f]} = \omega_{S^n}({\rm Im}(\theta_{N \times [0, 1]})).
\]
\end{Lemma}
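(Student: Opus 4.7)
The plan is to reduce the computation of the stabiliser of an arbitrary $[N, f]$ to the base-point case, which is already recorded in $(\ref{L&bPacteqn})$, by transporting the calculation along the homotopy equivalence $f: N \to M$ itself. Applying Lemma \ref{sesnatlem} with $g = f$ gives a bijection $f_* : \mc{S}^{Diff}(N) \to \mc{S}^{Diff}(M)$ sending $[\mathrm{Id}_N] \mapsto [N, f]$ (with inverse induced by a homotopy inverse of $f$). Since $M$ and hence $N$ is simply-connected, the induced map $f_* : L_{n+1}(e) \to L_{n+1}(e)$ is the identity, so the left square of Lemma \ref{sesnatlem} specialises to $\omega_M = f_* \circ \omega_N$.

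Next I would verify that $f_*$ is $L_{n+1}(e)$-equivariant, not merely compatible at the base point. Unwinding the definition of the action, an element $x \in L_{n+1}(e)$ acts on $[P, h]$ by connected sum with the homotopy sphere $\omega_{S^n}(x) \in bP_{n+1} \subset \Theta_n$, carried out inside a smooth $n$-disc in $P$ over which $h$ may be arranged to be a diffeomorphism. Postcomposition with $f$ leaves this local modification untouched, so
\[ f_*(x \cdot [P, h]) = [\omega_{S^n}(x) \sharp P, f \circ h] = x \cdot f_*([P, h]). \]
Combined with bijectivity, equivariance identifies the stabiliser of $[N, f]$ in $\mc{S}^{Diff}(M)$ with the stabiliser of $[\mathrm{Id}_N]$ in $\mc{S}^{Diff}(N)$, and $(\ref{L&bPacteqn})$ applied to $N$ in place of $M$ identifies the latter with $\mathrm{Im}(\theta_{N \times [0, 1]})$, giving the first displayed formula.

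For the $bP_{n+1}$-statement, I would invoke Browder's result that the $L_{n+1}(e)$-action on $\mc{S}^{Diff}(M)$ factors through the surjection $\omega_{S^n} : L_{n+1}(e) \sra bP_{n+1}$. A standard observation for a group action factoring through a surjective homomorphism shows that the $bP_{n+1}$-stabiliser of $[N, f]$ is the image under $\omega_{S^n}$ of the $L_{n+1}(e)$-stabiliser, yielding $(bP_{n+1})_{[N, f]} = \omega_{S^n}(\mathrm{Im}(\theta_{N \times [0, 1]}))$.

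The only real obstacle is that Lemma \ref{sesnatlem} formally records just the commutativity of the naturality square rather than the full equivariance of $f_*$ as a map of $L_{n+1}(e)$-sets. This gap is bridged by the explicit geometric description of the action as connected sum inside a disc, which makes the equivariance immediate but must be spelled out; otherwise the argument is a routine combination of naturality with the base-point computation already in hand.
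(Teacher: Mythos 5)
Your argument is correct and takes essentially the same route as the paper: there, Lemma \ref{sesnatlem} is applied to a homotopy inverse $f^{-1}$, so that $f^{-1}_*([N,f])=[{\rm Id}_N]$ carries the stabiliser of $[N,f]$ to that of the base point computed in $(\ref{L&bPacteqn})$, and the $bP_{n+1}$ statement follows from Browder's factoring through $\omega_{S^n}$, just as in your write-up with $f$ in place of $f^{-1}$. The equivariance of $f_*$ that you flag is left implicit in the paper's proof, and your connected-sum justification of it is the correct way to fill that in.
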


\begin{proof}
Let $f^{-1} : M \rightarrow N$ be a homotopy inverse for $f$ and consider the commutative diagram of Lemma \ref{sesnatlem} applied to $f^{-1}$.  By definition, $f^{-1}_*([N, f]) = [{\rm Id_N}]$ and so $f^{-1}_* : L_{n+1}(e) = L_{n+1}(e)$ carries the stabiliser of $[N, f]$ isomorphically to the stabiliser of $[{\rm Id}_N]$ which we determined above in $(\ref{L&bPacteqn})$.  The statement for the action of $bP_{n+1}$ follows because $L_{n+1}(e)$ acts via the surjection $\omega_{S^n} : L_{n+1}(e) \rightarrow bP_{n+1}$.
\end{proof}


Given a homotopy equivalence $f: N \rightarrow M$, we would like to use functorality to relate $\theta_{M \times [0,1]}$ and $\theta_{N \times [0, 1]}$.  But as $f$ is merely a homotopy equivalence, composition with $f$ does not induce a map $\mathcal{N}^{Cat}(N \times [0, 1]) \rightarrow \mathcal{N}^{Cat}(M \times [0, 1])$: we need a $Cat$ isomorphism on the boundary.  We therefore find a way to return to the closed case.  Given a rel. boundary degree on normal map $(g, c) : W \rightarrow M \times [0, 1]$ we may always assume that $(g, c)$ is the identity over the boundary of $W$.  Gluing the ends of both domain and range together we obtain a well defined map 
\begin{equation} \label{Teqn}
T_M : \mathcal{N}^{Cat}(M \times [0, 1] ) \rightarrow \mathcal{N}^{Cat}(M \times S^1 ). 
\end{equation}
For the next lemma, recall that $\mc{N}^{Cat}(M \times [0, 1]) \equiv [\Sigma M, G/Cat]$ where $\Sigma M$ is the suspension of $M$.

\begin{Lemma}  \label{Tthetacomlem}   
With the identifications $\mathcal{N}^{Cat}(M \times [0, 1]) \equiv [\Sigma M, G/Cat]$ and $\mathcal{N}^{Cat}(M \times S^1) \equiv [M, G/Cat] \times [\Sigma M, G/Cat]$, the map $T_M$ is the inclusion onto the second factor.  Moreover for the canonical inclusion $L_{n+1}(e) \rightarrow L_{n+1}(\Z)$ the following diagram commutes
\[
\begin{diagram}
\node{\mathcal{N}^{Cat}(M \times [0, 1] )} \arrow{e,t}{\theta_{M \times [0,1]} }\arrow{s,r}{T_M} \node{L_{n+1}(e)} \arrow{s,r}{=} \\
\node{\mathcal{N}^{Cat}(M \times S^1  )} \arrow{e,t}{\theta_{M \times S^1} }\node{L_{n+1}(\Z).}  
\end{diagram}
\]
\end{Lemma}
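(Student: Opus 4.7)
The plan is to identify $T_M$ geometrically using the standard transversality description of normal invariants, and then invoke a cut-and-paste argument together with the Shaneson splitting of $L_{n+1}(\Z)$ for the commutativity statement.

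For the first part, let $q : M \times S^1 \to \Sigma M$ denote the quotient map collapsing $M \times \{*\}$ to a point. Since $G/Cat$ is an infinite loop space with $\pi_1(G/Cat) = 0$, the splitting $[M \times S^1, G/Cat] \cong [M, G/Cat] \times [\Sigma M, G/Cat]$ has its second factor realized precisely as the image of $q^*$. I would then verify that $T_M$ coincides with $q^*$ under the identification of normal invariants with maps to $G/Cat$: a rel.\,$\partial$ normal map $(g, c) : W \to M \times [0, 1]$ corresponds to a classifying map $\phi : M \times [0, 1] \to G/Cat$ that is constant on $M \times \{0, 1\}$, and so factors through the quotient $p : M \times [0, 1] \to \Sigma M$ as $\phi = \bar\phi \circ p$ for a unique $\bar\phi : \Sigma M \to G/Cat$. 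Gluing the ends of $M \times [0, 1]$ to form $M \times S^1$ converts $p$ into $q$, so $T_M(g, c)$ has classifying map $\bar\phi \circ q = q^*\bar\phi$, which lies in the second factor.

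For the commutativity of the surgery obstruction diagram, I would invoke the Shaneson splitting $L_{n+1}(\Z) \cong L_{n+1}(e) \oplus L_n(e)$, under which the canonical inclusion $\iota : L_{n+1}(e) \to L_{n+1}(\Z)$ induced by $\{1\} \hra \Z$ is the inclusion onto the first summand. Geometrically, this summand is realized by closed normal maps over $M \times S^1$ that restrict to the identity over the codimension-one hypersurface $M \times \{*\}$: cutting such a map along $M \times \{*\}$ produces a rel.\,boundary normal map over $M \times [0, 1]$ whose obstruction in $L_{n+1}(e)$ is sent by $\iota$ to the original obstruction in $L_{n+1}(\Z)$. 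Since by construction $T_M(g, c)$ is the identity over the gluing locus $M \times \{*\}$, cutting it along $M \times \{*\}$ recovers exactly $(g, c)$, yielding $\theta_{M \times S^1}(T_M(g, c)) = \iota(\theta_{M \times [0, 1]}(g, c))$.

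The principal obstacle is the rigorous geometric interpretation of the Shaneson splitting, specifically the claim that $\iota$ is realized precisely by the gluing construction $T_M$. This can be extracted from the Mayer--Vietoris sequence of Ranicki's algebraic $L$-theory applied to the decomposition $M \times S^1 = (M \times [0, 1]) \cup_{M \times \{0, 1\}} (M \times [0, 1])$ following \cite{Ra1, Ra2}, or alternatively by a direct inspection of quadratic signatures of glued manifolds. Given the simply-connected setting, no exotic phenomena in the $L_n(e)$ summand can obstruct the argument, so identifying the $L_{n+1}(e)$-component is all that is required.
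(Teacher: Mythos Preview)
Your proposal is correct and follows essentially the same route as the paper. For the first statement both arguments identify $T_M$ with the inclusion of the $[\Sigma M, G/Cat]$ summand via the cofibration $M \hookrightarrow M \times S^1 \to \Sigma M \vee S^1$ and $\pi_1(G/Cat)=0$; you phrase this via $q^*$ while the paper phrases it via the kernel of $i^*$, but these are the same splitting. For the commutativity the paper also invokes the Shaneson splitting to locate $\theta_{M\times S^1}(T_M(W,g,c))$ in the $L_{n+1}(e)$ summand, but then bypasses your ``principal obstacle'' by a more direct observation: the surgery kernel of $T_M(W,g,c)$ is literally the same $K_*$ as that of $(W,g,c)$, since gluing only adds the homology of $M\times S^1$ to $H_*(W)$, and hence the two obstructions coincide as elements of $L_{n+1}(e)$. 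This avoids having to extract the geometric cut-and-glue interpretation of the Shaneson isomorphism from \cite{Ra1, Ra2}.
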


\begin{proof}
Let $i : M = M \times \{ pt \} \rightarrow M \times S^1$ be the inclusion and identify $(M \times S^1)/i(M) \equiv \Sigma M \vee S^1$.  Then since $\pi_1(G/Cat) = 0$ we have a short exact sequence
\[  [\Sigma M, G/Cat] \dlra{} [M \times S^1, G/Cat] \dlra{i^*} [M, G/Cat] \]
where $i^*([V, f, b]) = 0$ if and only if $(V, f, b)$ is normally bordant to a degree one normal map which splits along $M \times \{ pt \} \subset M \times S^1$ as a $Cat$-isomorphism.  This is certainly true for any $T_M([W, g, c])$.  On the other hand, if $i^*([V, f, b]) = 0$ we see that $[V, f, b]$ lies in the image of $T_M$.

The second statement uses Shaneson splitting, \cite{Sh} : $L_{n+1}(\Z) \cong L_{n+1}(e) \oplus L_{n}(e)$.  As $T_M([W, g, c])$ splits along $M \times \{ pt \}$, $\theta_{M \times S^1}(T_M([W, g, c]))$ lies in the $L_{n+1}(e)$ component of $L_{n+1}(\Z)$.  Moreover, the surgery kernels are the same for $(W, g, c)$ and $T_M(W, g, c)$: homologically, if  $K_*$ is the surgery kernel of $(W, g, c)$ then $H_*(W) \cong K_* \oplus H_*(M \times [0, 1])$ and $H_*(T_M(W)) \cong K_* \oplus H_*(M \times S^1)$.  It follows that the surgery obstructions for $(W, g, c)$ and $(T_M(W, g, c))$ are the same element of $L_{n+1}(e)$.

\end{proof}

For a homotopy equivalence $f: N \rightarrow M$ the homotopy equivalence $f \times {\rm Id} : N \times S^1 \rightarrow M \times S^1$ induces the following commutative diagram by Lemma \ref{sesnatlem}:
\begin{equation} \label{S^1comeqn}
\begin{diagram}
\node{\mathcal{N}^{Cat}(N \times S^1)} \arrow{e,t}{\theta^{Cat}_{N \times S^1}} \arrow{s,r}{(f \times {\rm Id})_*} \node{L_{n+1}(\Z)} \arrow{s,r}{=} \\
\node{\mathcal{N}^{Cat}(M \times S^1)} \arrow{e,t}{\theta^{Cat}_{M \times S^1}} \node{L_{n+1}(\Z).}  
\end{diagram}
\end{equation}
\noindent
We now combine the map $T_M$ and the above diagram to obtain our key lemma.

\begin{Lemma} \label{keysurgoblem}
Let $f: N \rightarrow M$ be a homotopy equivalence and let $x \in \mathcal{N}^{Cat}(N \times [0, 1])$.  The surgery obstruction maps $\theta_{M \times S^1}$ and $\theta_{N \times [0,1]}$ are related by 
\[ \theta_{N \times [0, 1]}(x) = \theta_{M \times S^1}(\eta([N \times S^1, f \times {\rm Id}_{}]) + (f \times {\rm Id}_{})^{-1*}(T_N(x))) ~ \in~ L_{n+1}(e).\]
\end{Lemma}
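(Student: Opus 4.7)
The plan is to compute $\theta_{N \times [0,1]}(x)$ by first converting $x$ from a rel-boundary problem to a closed one on $N \times S^1$ via $T_N$, then transporting along the homotopy equivalence $f \times \mathrm{Id}_{S^1} : N \times S^1 \to M \times S^1$, and finally identifying the resulting element of $\mc{N}^{Cat}(M \times S^1)$ with the stated Whitney sum in $[M \times S^1, G/Cat]$. Applying $\theta_{M \times S^1}$ to both sides then yields the claim.

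Concretely, Lemma \ref{Tthetacomlem} applied to $N$ gives
\[\theta_{N \times [0,1]}(x) \;=\; \theta_{N \times S^1}(T_N(x)) \;\in\; L_{n+1}(e) \subset L_{n+1}(\Z).\]
The homotopy equivalence $f \times \mathrm{Id}_{S^1}$ is a map of closed manifolds, so Lemma \ref{sesnatlem} (equivalently, the commutative diagram $(\ref{S^1comeqn})$) provides
\[\theta_{N \times S^1}(T_N(x)) \;=\; \theta_{M \times S^1}\bigl((f \times \mathrm{Id})_*(T_N(x))\bigr).\]
Combining these two identities, the lemma will follow once we show the identity
\[(f \times \mathrm{Id})_*(T_N(x)) \;=\; \eta([N \times S^1,\, f \times \mathrm{Id}]) \,+\, (f \times \mathrm{Id})^{-1*}(T_N(x))\]
inside $\mc{N}^{Cat}(M \times S^1) \equiv [M \times S^1, G/Cat]$, equipped with its Whitney sum group structure.

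The main obstacle is establishing this last identity, which is an instance of the general formula: for an orientation-preserving homotopy equivalence $g: Y \to Z$ between closed $Cat$-manifolds and any $y \in \mc{N}^{Cat}(Y) \equiv [Y, G/Cat]$,
\[g_*(y) \;=\; \eta_Z([Y, g]) \,+\, g^{-1*}(y) \;\in\; [Z, G/Cat].\]
To prove this, I would unwind the Madsen--Milgram identification $\mc{N}^{Cat}(\cdot) \equiv [\cdot, G/Cat]$: a degree-one normal map $(V, h, b) : V \to Y$ records (stably) a fibre homotopy trivialisation of a $Cat$-bundle over $Y$ comparing $\nu_V$ with $h^* \nu_Y$. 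Composing $h$ with $g$ amounts to comparing $\nu_V$ with $(g \circ h)^* \nu_Z$, which requires inserting the stable fibre homotopy equivalence $g^* \nu_Z \simeq_{fh} \nu_Y$ that by definition classifies $\eta_Z([Y,g]) \in [Z, G/Cat]$, pulled back from $Z$ to $V$ via $g \circ h$; the remaining piece is the original trivialisation transported from $Y$ to $Z$ by a homotopy inverse of $g$, which is $g^{-1*}(y)$. Because composition of fibre homotopy trivialisations corresponds to Whitney sum under the $G/Cat$ identification (cf.\ the mechanism of Lemma \ref{grpprodlem}), the two contributions add to give the displayed formula. Sanity checks ($y = 0$ reproduces $g_*(0) = \eta_Z([Y,g])$, and $g = \mathrm{Id}$ is trivial) confirm the result. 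Applied to $g = f \times \mathrm{Id}_{S^1}$ and $y = T_N(x)$, this completes the proof.
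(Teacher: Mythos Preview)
Your proof is correct and follows exactly the same route as the paper: first apply Lemma~\ref{Tthetacomlem} to pass from $N\times[0,1]$ to $N\times S^1$, then use the naturality diagram~$(\ref{S^1comeqn})$ to transport to $M\times S^1$, and finally invoke the composition formula $g_*(y)=\eta_Z([Y,g])+g^{-1*}(y)$. The only difference is that the paper records this composition formula as a separate Proposition (citing Brumfiel, Madsen--Taylor--Williams, and Ranicki) rather than sketching it inline as you do.
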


\begin{proof}
By Lemma \ref{sesnatlem} and $(\ref{S^1comeqn})$ we have the equalities
\[\theta_{N \times [0, 1]} = \theta_{N \times S^1} \circ T_N = \theta_{M \times S^1} \circ (f \times {\rm Id})_* \circ T_N.\]
We now require the following composition formula for normal invariants of \cite{Bru, M-T-W, Ra4}.


\begin{Proposition} \label{compoprop}
Let $g: Y \rightarrow Z$ be a homotopy equivalence of $Cat$ manifolds and let $(f, b): X \rightarrow Y$ be a degree one normal map representing $[X, f, b] \in [Y, G/Cat]$.  Then
\[ g_*([X, f, b]) = \eta^{Cat}([Y, g]) + g^{-1*}([X, f, b]) \in [Z, G/Cat].\]
\end{Proposition}

\begin{Remark}
The above formula goes back to \cite{Bru} where it was proven in the category of punctured, simply-connected, smooth manifolds and to \cite{M-T-W} where it was proven for topological manifolds with non-empty boundary.  From \cite{Ra4} we have it in the topological category for manifolds with or without boundary.  However one may check that the arguments of \cite{Bru, M-T-W} work equally well for closed manifolds of any fundamental group and of any category.
 \end{Remark}

\noindent
Now given $x \in \mathcal{N}^{Cat}(N \times [0, 1])$ we apply Proposition \ref{compoprop} to $f \times {\rm Id} : N \times S^1 \rightarrow M \times S^1$:
\[ \theta^{}_{N \times [0,1]}(x) = \theta^{}_{M \times S^1}((f \times {\rm Id})_*(T_N(x))) = \theta^{}_{M \times S^1}(\eta^{}([N \times S^1, f \times {\rm Id}] + (f \times {\rm Id})^{-1*}(T_N(x))).\]
%
\end{proof}

\begin{Remark} \label{rewriterem}
A homotopy equivalence $f: N \rightarrow M$ does define the following map
\[f_\times :  \mathcal{N}^{Cat}(N \times [0, 1]) \equiv [\Sigma N, G/Cat] \dlra{f^{-1*}} [\Sigma M, G/Cat] \equiv \mathcal{N}^{Cat}(M \times [0,1]). \]
It is not hard to check that $(f \times {\rm Id})^{-1*}(T_N(x)) = T_M(f_\times(x))$ and so we could rewrite the equation of Lemma \ref{keysurgoblem} as 
\[ \theta_{N \times [0, 1]}(x) = \theta_{M \times S^1}(\eta([N \times S^1, f \times {\rm Id}_{}]) \times T_M(f_\times(x))) ~\in~ L_{n+1}(e). \]
\end{Remark}

\begin{Corollary} \label{stabhomeodepcor}
For $[N, f] \in \mc{S}^{Diff}(M)$, the stabilisers $L_{n+1}(e)_{[N, f]}$ and $(bP_{n+1})_{[N, f]}$ depend only upon the homeomorphism type of $N$.
\end{Corollary}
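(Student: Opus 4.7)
The plan is to combine Lemma \ref{bPactlem} with the reduction from $N \times [0,1]$ to the closed manifold $N \times S^1$ provided by Lemma \ref{Tthetacomlem}, and then invoke Lemma \ref{sesnatlem} for the homeomorphism $h \times {\rm Id}_{S^1}$.

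First, by Lemma \ref{bPactlem}, $L_{n+1}(e)_{[N, f]} = {\rm Im}(\theta^{Diff}_{N \times [0,1]})$ and $(bP_{n+1})_{[N, f]} = \omega_{S^n}({\rm Im}(\theta^{Diff}_{N \times [0,1]}))$; both are visibly independent of $f$. It therefore suffices to show that ${\rm Im}(\theta^{Diff}_{N \times [0,1]})$ depends only on the homeomorphism type of $N$, the $bP_{n+1}$ statement then following by applying $\omega_{S^n}$. Using Lemma \ref{Tthetacomlem} we rewrite this image as $\theta^{Diff}_{N \times S^1}(T_N(\mc{N}^{Diff}(N \times [0,1])))$, thereby expressing the stabiliser in terms of data on the closed smooth manifold $N \times S^1$.

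Now let $h : N_1 \to N_0$ be a homeomorphism of closed smooth manifolds and set $H := h \times {\rm Id}_{S^1}$; this is a homeomorphism---hence a homotopy equivalence---between closed smooth manifolds. Lemma \ref{sesnatlem} therefore yields a bijection $H_* : \mc{N}^{Diff}(N_1 \times S^1) \to \mc{N}^{Diff}(N_0 \times S^1)$ (with inverse $(H^{-1})_*$) satisfying $\theta^{Diff}_{N_0 \times S^1} \circ H_* = \theta^{Diff}_{N_1 \times S^1}$. Combined with the preceding paragraph, the equality ${\rm Im}(\theta^{Diff}_{N_1 \times [0,1]}) = {\rm Im}(\theta^{Diff}_{N_0 \times [0,1]})$ will follow from the naturality statement
\[ H_*\bigl(T_{N_1}(\mc{N}^{Diff}(N_1 \times [0,1]))\bigr) = T_{N_0}(\mc{N}^{Diff}(N_0 \times [0,1])). \]

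The only nontrivial step is verifying this last equality, and I expect it to be the main obstacle. Recall from Lemma \ref{Tthetacomlem} that $T_X$ is the inclusion onto the second factor in the splitting $[X \times S^1, G/O] \equiv [X, G/O] \times [\Sigma X, G/O]$ arising from the split cofibre sequence $X \hookrightarrow X \times S^1 \twoheadrightarrow \Sigma X$. Since $H = h \times {\rm Id}_{S^1}$ covers $h$ on the $X$-summand and induces $\Sigma h$ on the quotient, $H_*$ respects this splitting and restricts on second factors to the bijection $(\Sigma h)_*$. Because $T_{N_i}$ is a bijection onto its second factor, $H_*$ therefore carries one $T$-image onto the other, establishing the desired equality and completing the proof.
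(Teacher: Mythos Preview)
There is a genuine gap in your final step. The map $H_*: \mc{N}^{Diff}(N_1 \times S^1) \to \mc{N}^{Diff}(N_0 \times S^1)$ supplied by Lemma \ref{sesnatlem} is \emph{not} the homotopy-theoretic map $(H^{-1})^*$ on $[-, G/O]$; by the composition formula of Proposition \ref{compoprop} these differ by the constant shift $\eta^{Diff}([N_1 \times S^1, H])$. Since $H = h \times {\rm Id}_{S^1}$ is a product, this normal invariant lies in the first summand $[N_0, G/O]$ and is (the pullback of) $\eta^{Diff}([N_1, h])$, which is in general nonzero when $h$ is merely a homeomorphism between distinct smooth structures. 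Thus $H_*$ carries the image of $T_{N_1}$, namely $\{0\} \times [\Sigma N_1, G/O]$, onto the coset $\{\eta^{Diff}([N_1, h])\} \times [\Sigma N_0, G/O]$ rather than onto ${\rm Im}(T_{N_0}) = \{0\} \times [\Sigma N_0, G/O]$. Your justification via the cofibre sequence is an argument for $(H^{-1})^*$, not for $H_*$. And since $\theta^{Diff}_{N_0 \times S^1}$ is not a homomorphism for the Whitney-sum structure, you cannot simply discard this shift.

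The paper's proof circumvents exactly this obstacle by passing through the topological category: one factors $\theta^{Diff}_{N \times [0,1]} = \theta^{Top}_{N \times [0,1]} \circ F$ (where $F: [\Sigma N, G/O] \to [\Sigma N, G/Top]$ depends only on the homotopy type of $N$) and then applies the topological version of Remark \ref{rewriterem} to the homeomorphism $h$. The key point is that $\eta^{Top}([N_1 \times S^1, h \times {\rm Id}]) = 0$ because $h$ is a $Top$-isomorphism, so the offending shift vanishes in $Top$ and your naturality argument goes through there. Your overall architecture is sound; what is missing is the passage $\theta^{Diff} = \theta^{Top} \circ F$ to kill the smooth normal invariant of $H$.
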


\begin{proof}
This follows quickly from Remark \ref{rewriterem} and the fact that $\theta^{Diff}_{N \times [0, 1]} = \theta^{Top}_{N \times [0, 1]} \circ F$ and also the fact that $\eta^{Top}([N \times S^1, f \times {\rm Id}]) = 0$ if $f$ is a homeomorphism.
\end{proof}


\section{The proofs of Theorems \ref{mainthm1} and \ref{mainthm2}}  \label{proofsec}
\begin{proof}[Proof of Theorem \ref{mainthm1}]
We shall move along the sequence of Theorem \ref{mainthm1} from left to right: by \cite{DS} or \cite{Sc} the inertia group of $S^p \times S^q$ is trivial and thus $\Theta_{p+q}$ acts freely on the base point of $\mathcal{S}^{Diff}(S^p \times S^q)$.  

We next determine the formula for the surgery obstruction map $\theta_{S^p \times S^q}^{}$.  Recall that $i: S^p \vee S^q \rightarrow S^p \times S^q$ is the inclusion and that $c : S^p \times S^q \rightarrow S^{p+q}$ is the collapse map.  There is a short exact sequence
\begin{equation}
0 \dlra{} \pi_{p+q}(G/Cat) \dlra{c^*} [S^p \times S^q, G/Cat] \dlra{i^*} \pi_p(G/Cat) \times \pi_q(G/Cat) \dlra{} 0
\end{equation}
and the product map $\pi_{p, q} : \mc{N}^{Cat}(S^p) \times \mc{N}^{Cat}(S^q)$ from Proposition \ref{surgprodprop} gives the map
\[ \pi_{p, q} : \pi_p(G/Cat) \times \pi_q(G/Cat) \equiv \mathcal{N}^{Cat}(S^p) \times \mathcal{N}^{Cat}(S^q) \dlra{} \mathcal{N}^{Cat}(S^p \times S^q).\]
We leave the reader to check that $i^* \circ \pi_{p, q}$ is the identity on $\pi_p(G/Cat) \times \pi_q(G/Cat)$ and henceforth use $\pi_{p, q} \times c^*$ to make the identification
\[ (\pi_p(G/Cat) \times \pi_q(G/Cat) ) \times \pi_{p+q}(G/Cat) \dlra{ \pi_{p, q} \times c^*} [S^p \times S^q, G/Cat] \equiv \mathcal{N}^{Cat}(S^p \times S^q).\] 
Before stating the next lemma we recall the isomorphism $\pi_i(G/Top) \cong L_i(e)$, the product  $L_{p}(e) \tensor L_{q}(e) \rightarrow L_{p+q}(e)$ and the homomorphism $F: \pi_i(G/O) \rightarrow \pi_i(G/Top)$ which may be identified with the surgery obstruction map $\theta^{Diff}_{S^i} : \pi_i(G/O) \rightarrow L_i(e)$.  It easy to check that $F: \mathcal{N}^{Diff}(S^p \times S^q) \rightarrow \mathcal{N}^{top}(S^p \times S^q)$ can be identified with the product of the induced maps on homotopy groups $F : \prod_{i=1}^3\pi_{p_i}(G/O) \rightarrow \prod_{i=1}^3\pi_{p_i}(G/Top)$, $(p_1, p_2, p_3) = (p, q, p+q)$.

\begin{Lemma} \label{surgoblem1}
With the above identification $\mc{N}^{Cat}(S^p \times S^q) \equiv \prod_{i=1}^{3}\pi_{p_i}(G/Cat)$, the surgery obstruction maps $\theta_{S^p \times S^q}^{Top}$ and $\theta_{S^p \times S^q}^{Diff}$ are given by:
\[ \theta^{Top}_{S^p \times S^q} :   \pi_p(G/Top) \times \pi_q(G/Top) \times \pi_{p+q}(G/Top)  \dlra{}  L_{p+q}(e), ~~~(x, y, z) \mapsto   xy + z, \]
\[\theta^{Diff}_{S^p \times S^q} :   \pi_p(G/O) \times \pi_q(G/O) \times \pi_{p+q}(G/O)  \dlra{}  L_{p+q}(e), ~~~(u, v, w) \mapsto  F(u)F(v) + F(w).\]
\end{Lemma}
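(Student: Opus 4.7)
The plan is to move along the identification $\pi_{p,q} \times c^* : \prod_{i=1}^3 \pi_{p_i}(G/Cat) \cong \mc{N}^{Cat}(S^p \times S^q)$ and write each element as a Whitney sum $\pi_{p,q}(x,y) + c^*(z)$, evaluating $\theta^{Cat}$ on the two summands separately. The product piece will be handled via Proposition \ref{surgprodprop}, the collapse piece via the connected-sum/Whitney-sum correspondence of Lemma \ref{sescomlem}, and the smooth formula will follow from the topological one by composing with the forgetful map $F$.

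First I would compute $\theta^{Top}_{S^p \times S^q}(\pi_{p,q}(x,y))$ by applying Proposition \ref{surgprodprop} with $M_0 = S^p$ and $M_1 = S^q$. Since $p, q \geq 2$ each sphere bounds a disc and hence has vanishing symmetric signature: $\sigma^*(S^p) = 0 \in L^p(e)$ and $\sigma^*(S^q) = 0 \in L^q(e)$. The product formula (\ref{surgobeqn}) therefore collapses to $\theta^{Top}_{S^p \times S^q}(\pi_{p,q}(x,y)) = \theta^{Top}_{S^p}(x) \, \theta^{Top}_{S^q}(y)$. Under the fundamental identification $\pi_n(G/Top) = L_n(e)$ the map $\theta^{Top}_{S^n}$ is the identity, so this equals $xy$ in $L_{p+q}(e)$.

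Next I would show that adding the $c^*(z)$ summand contributes exactly $z$ to the surgery obstruction. By Lemma \ref{sescomlem}, the Whitney-sum action of $\pi_{p+q}(G/Top) \cong \mc{N}^{Top}(S^{p+q})$ on $\mc{N}^{Top}(S^p \times S^q)$ via $c^*$ coincides with the geometric connected-sum action on normal maps. Since the surgery obstruction of simply-connected normal maps is additive under connected sum (intersection forms and Arf invariants add on disjoint supports), Whitney addition of $c^*(z)$ shifts $\theta^{Top}$ by $\theta^{Top}_{S^{p+q}}(z) = z$. Combined with the previous paragraph this yields $\theta^{Top}_{S^p \times S^q}(x, y, z) = xy + z$. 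The smooth formula is then immediate: the factorisation $\theta^{Diff}_M = \theta^{Top}_M \circ F$ together with the evident compatibilities $F \circ \pi^{Diff}_{p,q} = \pi^{Top}_{p,q} \circ (F \times F)$ and $F \circ c^* = c^* \circ F$ gives $\theta^{Diff}_{S^p \times S^q}(u, v, w) = F(u) F(v) + F(w)$.

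The main obstacle is the additivity step: verifying that the algebraic Whitney sum with $c^*(z)$ faithfully represents the geometric connected sum with a representative $V \to S^{p+q}$ of $z$, so that the two manifestly-additive operations can be matched. This is precisely the content of Lemma \ref{sescomlem}, but some unpacking of the normal-invariant identifications is needed to confirm the match. The vanishing $\sigma^*(S^n) = 0$ and the additivity of surgery obstructions under connected sum in the simply-connected setting are both routine and well-documented, so after dispatching the compatibility the formulae drop out.
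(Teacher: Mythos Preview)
Your proposal is correct and follows essentially the same approach as the paper: apply Proposition \ref{surgprodprop} to the product factor $\pi_{p,q}(x,y)$ using $\sigma^*(S^p)=\sigma^*(S^q)=0$, then use Lemma \ref{sescomlem} to identify the effect of the $c^*$-summand as an additive shift by the surgery obstruction on $S^{p+q}$. The only cosmetic difference is that the paper runs the argument first in the smooth category and remarks that the topological case is identical, whereas you do $Top$ first and deduce $Diff$ via $\theta^{Diff}=\theta^{Top}\circ F$; the paper also records an alternative derivation of the topological formula from \cite{Ra4}.
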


\begin{proof}
We give two proofs, the first starting with the smooth category: for $(u, v, 0) = \pi_{p, q}(u, v)$, we apply the surgery product formula of Proposition \ref{surgprodprop} and the fact that the symmetric signatures $\sigma^*(S^p)$ and $\sigma^*(S^q)$ vanish.  The normal bordism class $(u, v, w)$ is equal to that of $(u, v, 0)$ acted upon by $(0, 0, w) = c^*(w)$ for $w \in \pi_{p+q}(G/O)$ and we apply the commutativity of the diagram of Lemma \ref{sescomlem} to complete the proof.  The same proof works in the topological case where a version of Lemma \ref{sescomlem} of course holds.

A second proof may be given using \cite{Ra4}[Example 3.6(i)] where $\theta_{S^p \times S^q}^{Top}$ is given in terms of the alternative group structure on $\mc{N}^{Top}(S^p \times S^q)$.  Combining Ranicki's formula for the surgery obstruction map with his formula comparing the two group structures on $\mc{N}^{Top}(S^p \times S^q)$ gives the same answer for $\theta^{Top}_{S^p \times S^q}$ and the smooth case follows from the fact that $\theta^{Diff} = \theta^{Top} \circ F$.
\end{proof}
\noindent


%
%
%
%

From Lemma \ref{surgoblem1} we see that the image of $\eta^{Diff} : \mathcal{S}^{Diff}(S^p \times S^q) \rightarrow \mathcal{N}^{Diff}(S^p \times S^q)$ is the set $\{ (u, v, w) | F(u)F(v) -F(w) = 0 \}$.  In particular, if $(u, v) \in {\rm Im}(i^* \circ \eta^{Diff})$ then the set of $w$ such that $(u, v, w)$ lies in the image of $\eta^{Diff}$ is a coset of ${\rm Ker}(F: \pi_{p+q}(G/O) \rightarrow \pi_{p+q}(G/Top))$.  Applying the exactness of the surgery exact sequence for $S^{p+q}$ and the commutative diagram of Lemma \ref{sescomlem} we see that $\Theta_{p+q}$ acts transitively on $(i^* \circ \eta^{Diff})^{-1}(u, v) \subset \mathcal{S}^{Diff}(S^p \times S^q)$.

We now characterise the image of $i^* \circ \eta^{Diff} : \mathcal{S}^{Diff}(S^p \times S^q) \rightarrow  \pi_p(G/O) \times \pi_q(G/O)$.   Recall that $t_i$ is the non-negative integer defined by $t_i = 0$ if $i$ is not divisible by $4$ and $F(\pi_{4k}(G/O)) = t_{4k} \cdot \pi_{4k}(G/Top)$.
Recall also that we have chosen a splitting $\tau_{4k} \times \phi_{4k} : \pi_{4k}(G/O) \cong \Theta_{4k} \times \Z$.  For $i \neq 4k$ we define $\phi_{i} : \pi_i(G/O) \rightarrow 0$ to be the zero map and for all $i$ we choose fixed generators $z_i$ for $L_i(e)$ such that $F(\phi^{-1}_{4k})(1) = t_{4k} \, z_{4k}$ when $i=4k$.
With these conventions Lemma \ref{surgoblem1} yields
\[ \theta^{Diff}_{S^p \times S^q} \circ \pi_{p, q} :  \pi_p(G/O) \times \pi_q(G/O) \rightarrow L_{p+q}(e), ~~~(u, v) \mapsto \pm  8 \, \phi_p(u) \, \phi_q(v) \, t_p \, t_q \, z_{p+q} \in L_{p+q}(e).
\]
Applying Lemma \ref{surgoblem1} again we see that the map $\del$ of Theorem \ref{mainthm1} is  the bilinear map 
\[ \del: \pi_p(G/O) \times \pi_q(G/O) \rightarrow 8 \, t_p \,  t_q \cdot bP_{p+q}(e), ~~~(u, v) \mapsto (\omega_{S^{p+q}} \circ \theta^{Diff} \circ \pi_{p, q})(u, v).
\]
\noindent
We saw above that $(u, v) \in {\rm Im}(i^* \circ \eta^{Diff})$ if and only if there is a $w \in \pi_{p+q}(G/O)$ such that $F(u) F(v) = -F(w)$.  It follows that $(u, v) \in {\rm Im}(i^* \circ \eta^{Diff})$ if and only if $\del(u, v) = 0$.  Finally, as $\pi_{4i}(G/O)$ maps onto $t_{4i} \cdot \pi_{4i}(G/Top)$, $\del$ maps $\pi_p(G/O) \times \pi_q(G/O)$ onto $8 \, t_p \, t_q \cdot bP_{p+q}(e)$.  
\end{proof}

\begin{proof}[Proof of Theorem \ref{mainthm2}]  
The top cell of $S^p \times S^q$ splits off after just one suspension and so by Corollary \ref{Theta/bPcor} $\Theta_{p+q}/bP_{p+q+1}$ acts freely on $\mc{S}^{Diff}(S^p \times S^q)/bP_{p+q+1}$.  So the exact sequence of Theorem \ref{mainthm2} follows from the exact sequence of Theorem \ref{mainthm1} and the expression for $\theta^{Diff}_{S^p \times S^q}$ in Lemma \ref{surgoblem1}.  In particular If $p+q \neq 4k$ then $\bar \eta : \mc{S}^{Diff}(S^p \times S^q) \rightarrow \Theta_{p+q}/bP_{p+q+1} \times \pi_p(G/O) \times \pi_q(G/O)$ is just the map $\eta^{Diff}$ onto its image where we have identified $\mc{N}^{Diff}(S^p \times S^q) = \prod_{i=1}^{3}\pi_{p_i}(G/O)$ as in the proof of Theorem \ref{mainthm1} and where we also identify $\Theta_{p+q}/bP_{p+q+1}$ with ${\rm Ker}(\theta^{Diff}_{S^{p+q}} : \pi_{p+q}(G/O) \rightarrow L_{p+q}(e))$.  When $p+q = 4k$ we use the splitting $\tau_{4k} \times \phi_{4k}: \pi_{4k}(G/O) \cong \Theta_{4k} \times \Z$ and must only note that $\theta^{Diff}_{S^{4k}} : \pi_{4k}(G/O)\rightarrow L_{4k}(e)$ factors through $\phi_{4k}$.

It remains to determine the action of $bP_{p+q}$ on $\mathcal{S}^{Diff}(S^p \times S^q)$.  If $p+q$ is odd then $bP_{p+q} = 0$ and there is nothing to prove.  So we assume that $p$ is odd and $q$ is even and begin by determining $\theta^{}_{S^p \times S^q \times S^1}$.  Once again we use products to identify the normal invariant set.  We have the map
\[\pi_{p \times 1, q} : \mathcal{N}^{Cat}(S^p \times S^1) \times \mathcal{N}^{Cat}(S^q) \rightarrow \mathcal{N}^{Cat}(S^p \times S^q \times S^1) \]
and for the inclusion $j : (S^p \times S^q \times S^1)-D^{p+q+1} \rightarrow S^p \times S^q \times S^1$ we have the short exact sequence
 \[ \pi_{p+q+1}(G/Cat) \dlra{c^*} [S^p \times S^q \times S^1, G/Cat] \dlra{j^*} [((S^p \times S^q \times S^1)-D^{p+q+1}), G/Cat]\] 
and a canonical identification
\[ [((S^p \times S^q \times S^1)-D^{p+q+1}), G/Cat] \cong [S^p \times S^1, G/Cat] \times [S^q \times S^1, G/Cat]. \]
In the topological case $\pi_p(G/Top) = 0 = \pi_{q+1}(G/Top)$ so that $[S^p \times S^1, G/Top] \cong \pi_{p+1}(G/Top)$, $[S^q \times S^1, G/Top] \cong \pi_q(G/Top)$ and $\pi_{p \times 1, q}$ defines a section for $j^*$.

\begin{Lemma} \label{nrmlinvtlem}
The map $(\pi_{p \times 1, q} \times c^*)^{-1}$ gives an identification
\[ \mathcal{N}^{Top}(S^p \times S^q \times S^1) \equiv L_{p+1}(e) \times L_{q}(e) \times L_{p+q+1}(e)  \]
such that the surgery obstruction map has the following form
\[ \theta^{Top}_{S^p \times S^q \times S^1} : L_{p+1}(e) \times L_{q}(e) \times L_{p+q+1}(e)  \dlra{} L_{p+q+1}(e), ~~~(x,y,z) \mapsto x \cdot y + z \]
and such that the map $T_{S^p \times S^q}$ from $(\ref{Teqn})$ takes the form
\[ T_{S^p \times S^q}: L_{p+1}(e) \times L_{p+q+1}(e) \dlra{} L_{p+1}(e) \times L_{q}(e) \times L_{p+q+1}(e),~~~(x, z) \mapsto (x, 0, z)  .\]
\end{Lemma}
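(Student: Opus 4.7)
The plan is to verify the three assertions of Lemma~\ref{nrmlinvtlem} in turn, exploiting heavily the hypothesis that $p$ is odd and $q$ is even, which forces $L_p(e) = L_{q+1}(e) = L_1(e) = 0$ under the identification $\pi_i(G/Top) \cong L_i(e)$. First I would establish the identification of the normal invariant set. The standard splittings of $[S^p \times S^1, G/Top]$ and $[S^q \times S^1, G/Top]$ coming from the cofibre sequences $S^p \vee S^1 \hra S^p \times S^1 \ra S^{p+1}$ (and analogously for $S^q \times S^1$) collapse, under the vanishing above, to $[S^p \times S^1, G/Top] \cong L_{p+1}(e)$ and $[S^q \times S^1, G/Top] \cong L_q(e)$. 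A routine check along the lines of the verification $i^* \circ \pi_{p,q} = \mathrm{Id}$ carried out in the proof of Theorem~\ref{mainthm1} then shows $j^* \circ \pi_{p \times 1, q} = \mathrm{Id}$, splitting the short exact sequence involving $j^*$ and producing the claimed identification $\mc{N}^{Top}(S^p \times S^q \times S^1) \equiv L_{p+1}(e) \times L_q(e) \times L_{p+q+1}(e)$.

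Next, to compute $\theta^{Top}_{S^p \times S^q \times S^1}$ I would mimic the two-part strategy used in Lemma~\ref{surgoblem1}. For the $c^*(z)$ summand, the topological analog of Lemma~\ref{sescomlem} applied to the collapse $c : S^p \times S^q \times S^1 \ra S^{p+q+1}$ shows that the surgery obstruction of $c^*(z)$ is simply $z \in L_{p+q+1}(e)$. For the $\pi_{p \times 1, q}(x, y)$ summand I would apply the surgery product formula of Proposition~\ref{surgprodprop} with $M_0 = S^p \times S^1$ and $M_1 = S^q$. The leading term $\theta_{M_0}(x) \theta_{M_1}(y)$ yields the pairing $x \cdot y \in L_{p+q+1}(e)$, while the symmetric signature corrections involving $\sigma^*(S^p \times S^1) \in L^{p+1}(e)$ and $\sigma^*(S^q) \in L^q(e)$ either vanish for parity reasons or land outside the $L_{p+q+1}(e)$-summand of $L_{p+q+1}(\Z)$ singled out by Shaneson splitting.

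For the third assertion I would appeal directly to Lemma~\ref{Tthetacomlem}, which identifies $T_M$ as the inclusion onto the second factor of $\mc{N}^{Top}(M \times S^1) \equiv [M, G/Top] \times [\Sigma M, G/Top]$. With the suspension decomposition $\Sigma(S^p \times S^q) \simeq S^{p+1} \vee S^{q+1} \vee S^{p+q+1}$ and the vanishing $L_{q+1}(e) = 0$, the second factor reduces to $L_{p+1}(e) \times L_{p+q+1}(e)$, and matching this with the identification from the first step produces the formula $(x, z) \mapsto (x, 0, z)$, with the middle $0$ sitting in the $L_q(e) = [S^q, G/Top]$ coordinate.

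The main obstacle I foresee is the bookkeeping in the second step: the surgery obstructions for normal maps to $S^p \times S^q \times S^1$ a priori live in $L_{p+q+1}(\Z) \cong L_{p+q+1}(e) \oplus L_{p+q}(e)$, so one has to confirm that the contributions produced by Proposition~\ref{surgprodprop} project cleanly onto $L_{p+q+1}(e)$ in exactly the advertised form $xy + z$. No further inputs beyond Proposition~\ref{surgprodprop}, Lemma~\ref{Tthetacomlem}, and Shaneson splitting should be required, but verifying that the cross terms involving $\sigma^*$ do not pollute the $L_{p+q+1}(e)$ component is where the calculational care is needed.
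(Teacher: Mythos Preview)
Your proposal is correct and follows essentially the same route as the paper: Proposition~\ref{surgprodprop} plus vanishing of the relevant symmetric signatures for $\theta^{Top}$, and Lemma~\ref{Tthetacomlem} for the form of $T_{S^p\times S^q}$. Two small remarks: first, your hedging about cross terms is unnecessary, since $\sigma^*(S^p\times S^1)$ and $\sigma^*(S^q)$ vanish outright (both manifolds have zero signature when the target $L^*$-group is $\Z$, and otherwise the group itself is zero), so no Shaneson projection is needed at that step; second, for the $T_{S^p\times S^q}$ formula the paper makes the ``matching'' you invoke explicit by introducing the product map $P_q$ (crossing with ${\rm Id}_{S^q}$) and checking the commutation $T_{S^p\times S^q}\circ P_q = P_q\circ T_{S^p}$, which pins down that the $L_{p+1}(e)$ summand in the suspension splitting agrees with the one coming from $\pi_{p\times 1,q}$.
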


\begin{proof}
The identification of $\theta_{S^p \times S^q \times S^1}^{Top}$ follows from Proposition \ref{surgprodprop} and that fact that the symmetric signatures $\sigma^*(S^p \times S^1)$ and $\sigma^*(S^q)$ vanish.  The second statement may require a little comment: denote by $P_q$ both the maps $ \mathcal{N}^{Top}(S^p \times [0, 1]) \rightarrow \mathcal{N}^{Top}(S^p \times S^q \times [0, 1])$ and $\mathcal{N}^{Top}(S^p \times S^1) \rightarrow \mathcal{N}^{Top}(S^p \times S^q \times S^1)$ which multiply the domain and range of a degree one normal map with ${\rm Id} : S^q = S^q$.  Under the identification 
\[ \mathcal{N}^{Top}(S^p \times S^q \times [0, 1]) = [S^{p+1} \vee S^{q+1} \vee S^{p+q+1}, G/Top] \]
the summand $\pi_{p+1}(G/Top)$ is equal to $P_q(\mathcal{N}^{Top}(S^p \times [0, 1]))$. The result follows from the easily verified identity $T_{S^p \times S^q} \circ P_q = P_q \circ T_{S^p}$.
\end{proof}
\noindent Now let $i_q : S^q \rightarrow S^p \times S^q$ be the inclusion and recall that $z_{i}$ is a fixed generator of $\pi_{i}(G/Top)$.  For $[N, f] \in \mc{S}^{Diff}(S^p \times S^q)$ we extend the definition of  $d([N, f]) \in \Z$ from Theorem \ref{mainthm2} by
\[ d([N, f]) = \left\{ \begin{array}{cl} 0 & \text{~if $q = 2$ mod $4$} \\ F( (i^*_{4k} \circ \eta^{Diff})([N,f]))) = d([N, f]) \, t_{4k} \, z_{4k} \in \pi_{4k}(G/Top) & \text{~if $q = 4k$.}
 \end{array} \right. \]


\begin{Lemma} \label{thetatoplem}
Let $[N, f] \in \mc{S}^{Diff}(S^{p} \times S^{q})$.  Then we have the following formulae for the surgery obstruction maps $\theta^{Top}_{N \times [0, 1]}$ and $\theta^{Diff}_{N \times [0, 1]}$:
\[ \theta^{Top}_{N \times [0,1]} : L_{p+1}(e) \times L_{p+q+1}(e) \rightarrow L_{p+q+1}(e), ~~(x, z) \mapsto d([N, f]) \, t_{q} \, x \, z_q + z,\]
%
\[ \theta^{Diff}_{N \times [0,1]} : \prod_{i=1}^3 \pi_{p_i+1}(G/O) \rightarrow L_{p+q+1}(e), ~~(u, v, w) \mapsto d([N, f]) \, t_{q} \, F(u) \, z_q  + F(w).\]
\end{Lemma}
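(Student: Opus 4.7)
The plan is to apply the key Lemma \ref{keysurgoblem} with $M = S^p \times S^q$ and the homotopy equivalence $f : N \to M$. This reduces $\theta^{Top}_{N \times [0,1]}(x)$ to evaluating the known surgery obstruction map $\theta^{Top}_{S^p \times S^q \times S^1}$ of Lemma \ref{nrmlinvtlem} on the sum
\[ \eta^{Top}([N \times S^1, f \times {\rm Id}]) \, + \, (f \times {\rm Id})^{-1*}(T_N(x)). \]
After identifying each summand inside $\mc{N}^{Top}(S^p \times S^q \times S^1) \equiv L_{p+1}(e) \times L_q(e) \times L_{p+q+1}(e)$, the product formula $(a,b,c) \mapsto ab + c$ will produce the stated expression.

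For the second summand, the identification of $\mc{N}^{Top}(N \times [0,1]) \equiv L_{p+1}(e) \times L_{p+q+1}(e)$ is naturally induced by $\Sigma f^{-1}$ from the analogous splitting of $\mc{N}^{Top}(S^p \times S^q \times [0,1])$ (the factor $L_{q+1}(e)$ vanishes because $q$ is even). The commutative square (\ref{S^1comeqn}) and naturality of $T$ give $(f \times {\rm Id})^{-1*} \circ T_N = T_{S^p \times S^q}$ under these identifications, so by Lemma \ref{nrmlinvtlem} this summand is exactly $(x_1, 0, z)$ for $x = (x_1, z)$.

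The crucial computation is the first summand. Since $p$ and $p+q$ are odd, the groups $L_p(e)$ and $L_{p+q}(e)$ vanish and hence $\eta^{Top}([N,f]) \in \mc{N}^{Top}(S^p \times S^q)$ lies entirely in the $L_q(e)$ factor under the splitting $\pi_{p, q} \times c^*$. By the definition of $d([N,f])$ this component equals $F(i_q^* \eta^{Diff}([N,f])) = d([N,f]) \, t_q \, z_q$; equivalently $\eta^{Top}([N,f]) = \pi_{p,q}(0, d([N,f])\, t_q\, z_q)$. Compatibility of $\eta^{Top}$ with products (Lemma \ref{grpprodlem}, applied to $f$ and ${\rm Id}_{S^1}$) together with the easily checked identity $\pi \circ (\pi_{p,q} \times {\rm id}) = \pi_{p \times 1, q} \circ (\sigma \times {\rm id})$, where $\sigma$ interchanges factors, then shows that $\eta^{Top}([N \times S^1, f \times {\rm Id}]) = (0, d([N,f]) \, t_q \, z_q, 0)$ in the $\pi_{p \times 1, q} \times c^*$ decomposition.

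Adding the two contributions gives $(x_1, d([N,f])\, t_q\, z_q, z)$, and applying the formula for $\theta^{Top}_{S^p \times S^q \times S^1}$ from Lemma \ref{nrmlinvtlem} yields $d([N,f]) \, t_q \, x_1 \, z_q + z$, proving the topological formula. The smooth formula then follows immediately since $\theta^{Diff}_{N \times [0,1]} = \theta^{Top}_{N \times [0,1]} \circ F$, where $F$ is applied component-wise; the $\pi_{q+1}(G/O)$ component is automatically killed because $\pi_{q+1}(G/Top) = L_{q+1}(e) = 0$. The main obstacle is the product computation in step~3: one must pin down how $\pi_{p,q}$ and $\pi_{p \times 1, q}$ interact so that the product of $\eta^{Top}([N,f])$ with the trivial normal invariant of $S^1$ lands entirely in the middle factor and is not accompanied by spurious terms from $c^*$.
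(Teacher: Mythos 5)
Your argument is essentially the paper's own proof, spelled out in more detail: it identifies $\eta^{Top}([N \times S^1, f \times \mathrm{Id}]) = \pi_{p \times 1, q}(0, \eta^{Top}([N,f]))$ and then combines Lemmas \ref{grpprodlem}, \ref{keysurgoblem} and \ref{nrmlinvtlem}, deducing the smooth case from $\theta^{Diff}_{N \times [0,1]} = \theta^{Top}_{N \times [0,1]} \circ F$, exactly as the paper does. The only small caveat is that when $q \equiv 2 \bmod 4$ the $L_q(e)$-component of $\eta^{Top}([N,f])$ need not equal $d([N,f])\,t_q\,z_q = 0$, but the stated formula still follows because the product $L_{p+1}(e) \otimes L_q(e) \rightarrow L_{p+q+1}(e)$ vanishes in that case.
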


\begin{proof}
We first identify $\eta^{Top}([N \times S^1, f \times {\rm Id}]) \subset [S^1 \times S^p \times S^q, G/Top]$.  As $p$ is odd and $q$ is even, $\eta^{Top}([N, f]) \in \pi_q(G/Top) \subset \mathcal{N}^{Top}(S^p \times S^q)$ and so $\eta^{Top}([N \times S^1, f \times {\rm Id}]) = \pi_{p\times 1, q}(0, \eta^{Top}([N, f]))$.  The topological case now follows from Lemmas \ref{grpprodlem}, \ref{keysurgoblem} and \ref{nrmlinvtlem}.  The smooth case follows from the topological case and the fact that $\theta^{Diff}_{N \times [0,1]}  = \theta^{Top}_{N \times [0,1]}  \circ F$.
\end{proof}

We now complete the proof of Theorem \ref{mainthm2}. Let $[N, f] \in \mc{S}^{Diff}(S^p \times S^q)$.  If $(p, q) \neq (4j-1, 4k)$ then by Lemma \ref{thetatoplem}, ${\rm Im}(\theta^{Diff}_{N \times [0, 1]}) = F(\pi_{p+q+1}(G/O)) \subset L_{p+q+1}(e)$ and so by Lemma \ref{bPactlem}, $bP_{p+q+1}$ acts freely on all of $\mc{S}^{Diff}(S^p \times S^q)$.  However, when $(p, q) = (4j-1, 4k)$ Lemma \ref{thetatoplem} shows that ${\rm Im}(\theta^{Diff}_{N \times [0, 1]}) \subset L_{4(j+k)}(e)$ is the subgroup generated by $t_{4(j+k)} \, z_{4(j+k)}$ and $8 \, d([N, f]) \, t_{4k} \, t_{4j} \, z_{4(j+k)}$.  Applying Lemma \ref{bPactlem} again we conclude that the stabiliser of the action of $bP_{4(j+k)}$ on $[N, f]$ is $8 \, d([N,f]) \, t_{4j} \, t_{4k} \cdot bP_{4(j+k)}$.
\end{proof}

\section{The order of $bP_{4k}$} \label{numsec}
In this short section we review the computation the order of $bP_{4k}$.
\begin{Theorem} [\cite{Le}]
For $k \geq 2$, the group $bP_{4k}$ 
is a cyclic group of order 
\[ t_{4k} = a_k(2^{2k-2})(2^{2k-1}-1){\rm Num}(B_k/4k) \]
where $a_k = (3-(-1)^k)/2$, $B_k$ is the $k$-th Bernoulli number (topologist's indexing) and ${\rm Num}(B_k/4k)$ is the numerator of the fraction $B_k/4k$ expressed in reduced form.
\end{Theorem}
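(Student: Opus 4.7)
The plan is to reduce the computation of $|bP_{4k}|$ to the signature-theoretic index computation already implicit in (\ref{seseqnS^n}), and then to invoke classical arithmetic input to evaluate that index. Recall from the discussion following (\ref{seseqnS^n}) that we have the identification
\[ bP_{4k} \;\cong\; L_{4k}(e)\big/\theta^{Diff}\bigl(\pi_{4k}(G/O)\bigr)\;\cong\;\pi_{4k}(G/Top)\big/F\bigl(\pi_{4k}(G/O)\bigr), \]
so that $|bP_{4k}|$ equals the integer $t_{4k}$ already defined in the statement of Theorem \ref{mainthm1}: the index of the image of $F$ in $\pi_{4k}(G/Top)\cong L_{4k}(e)\cong\Z$, where the latter isomorphism is one-eighth of the signature of a surgery problem over $S^{4k}$.

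First I would translate this index into a divisibility question about signatures of almost-parallelizable manifolds. A class in $\pi_{4k}(G/O)$ is represented by a degree-one normal map $(f,b):M^{4k}\to S^{4k}$ and gives rise, after surgery below the middle dimension, to an almost-parallelizable closed $4k$-manifold $M$ whose image in $L_{4k}(e)$ is $\sigma(M)/8$. Hirzebruch's signature theorem, applied to an almost-parallelizable $M$, collapses to the single top term
\[ \sigma(M)\;=\;\frac{2^{2k}(2^{2k-1}-1)\,B_k}{(2k)!}\,\bigl\langle p_k(M),[M]\bigr\rangle, \]
since all lower Pontryagin classes are stably zero. Hence $t_{4k}$ equals one-eighth of the minimal positive value of this signature as $M$ ranges over closed almost-parallelizable $4k$-manifolds; equivalently, it measures the divisibility of $p_k[M]$ together with the rational coefficient above.

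Next I would input the two classical divisibility statements that make the formula explicit. On one hand, the top Pontryagin number $p_k[M]$ of a closed almost-parallelizable $4k$-manifold is always divisible by $a_k\,(2k-1)!$, with the factor $a_k=(3-(-1)^k)/2$ accounting for the extra $2$ in the even-$k$ case, and this bound is attained (by the Milnor $E_8$-plumbing and its relatives). On the other hand, the denominator of $B_k/4k$ in reduced form is, by the theorem of Adams on the $e$-invariant, exactly the order of the image of the $J$-homomorphism $J:\pi_{4k-1}(O)\to\pi^s_{4k-1}$, and this is precisely the obstruction to improving the divisibility any further. Combining these two ingredients with the Hirzebruch formula and cancelling $(2k)!$ against $a_k(2k-1)!$ and the $J$-image denominator yields the closed form
\[ t_{4k}\;=\;a_k\,2^{2k-2}\,(2^{2k-1}-1)\,\mathrm{Num}(B_k/4k). \]
Cyclicity of $bP_{4k}$ follows because $bP_{4k}$ is the cokernel of a homomorphism into $\Z$.

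The main obstacle — and the reason this section is billed as a \emph{review} — is the arithmetic identification of the $J$-image denominator with $\mathrm{Denom}(B_k/4k)$, together with the sharpness statement for $p_k[M]$ on almost-parallelizable manifolds. These are the results of Adams and of Kervaire-Milnor respectively, assembled by Levine in \cite{Le}; I would simply cite them rather than reprove them, and devote the section only to verifying that the bookkeeping of the prefactors $a_k$, $2^{2k-2}$ and $2^{2k-1}-1$ matches the form stated.
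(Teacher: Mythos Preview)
The paper does not prove this theorem at all: it is stated with attribution to Levine \cite{Le} and then used to tabulate the first few values of $t_{4k}$. Your proposal correctly recognises that the section is a review and that the substantive inputs (Adams on the $J$-homomorphism, Kervaire--Milnor on divisibility of $p_k[M]$) should be cited rather than reproven; your sketch of the bookkeeping via the Hirzebruch signature formula is the standard argument one finds in \cite{Le} and \cite{K-M}, and so supplies strictly more detail than the paper itself, which gives none.
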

Recalling that $t_{4} := |{\rm Cok}(\pi_{4}(G/O) \rightarrow \pi_{4}(G/Top))| = 2$ we have the following table: 
\[ t_4 = 2, ~~~ t_8 = 28 = 4 \times 7,~~~ t_{12} = 992 = 32 \times 31, ~~~t_{16} = 8182 = 64 \times 127. \]
\noindent
From this we obtain the following calculation of $8 \, t_{4k}\, t_{4j}\cdot bP_{4(j+k)}$ for small values of $j$ and $k$:
\[ 8 \, t_4 \, t_4\cdot bP_8 \cong \Z_7,~~~ 8 \, t_4 \, t_8 \cdot bP_{12} \cong \Z_{31}, ~~~ 8 \, t_4 \, t_{12} \cdot bP_{16}, \cong \Z_{127}, ~~~ 8 \, t_8 \, t_8\cdot bP_{16} = \Z_{127}.\]
Finally, it is well known that ${\rm Num}(B_k/4k)$ is odd and as a consequence we see that the $2$-primary component of $8 \, t_{4j}\, t_{4k} \cdot bP_{4(j+k)}$ is always trivial.




\noindent
\textsc{Diarmuid Crowley\\
School of Mathematical Sciences \\
University of Adelaide \\
Australia, 5005.}\\ \\
{\it E-mail address:} \texttt{diarmuidc23@gmail.com}

\end{document}